\DeclareMathAlphabet{\mathpzc}{OT1}{pzc}{m}{it}
\newtheorem{theorem}{Theorem}[section]
\newtheorem{corollary}[theorem]{Corollary}
\newtheorem{lemma}[theorem]{Lemma}
\theoremstyle{definition}
\theoremstyle{remark}
\newtheorem{remark}[theorem]{Remark}
\begin{document}

\pagenumbering{arabic}
\title[]{Semi-infinite combinatorics in representation theory} \author[]{Martina Lanini}
\begin{abstract} In this work we discuss some appearances of semi-infinite combinatorics in representation theory. We propose a semi-infinite moment graph theory and we motivate it by considering the (not yet rigorously defined) geometric side of the story. We show that it is possible to compute stalks of the local intersection cohomology of the semi-infinite flag variety, and hence of spaces of quasi maps, by performing an  algorithm due to Braden and MacPherson.
\end{abstract}

\address{School of Mathematics, University of Edinburgh, Edinburgh, EH9 3FD, UK}
\email{m.lanini@ed.ac.uk}
\maketitle

\section{Introduction}

Semi-infinite combinatorics occurs - or it is expected to occur- in representation theory of quantum groups at a root
of unity, of Lie algebras and algebraic groups in positive characteristics, of complex affine Kac-Moody algebras.  
This paper does not aim at furnishing an exhaustive list of such occurrences, but rather at presenting the ones
the author has personally been working on.

While looking for evidences for his modular conjecture, Lusztig introduced the periodic module, a certain 
$\mathbb{Z}[v^{\pm 1}]$-module equipped with an action of the affine Hecke algebra \cite{Lu2}. In loc. cit. 
he studied a periodic analogue of Kazhdan-Lusztig basis elements and in his 1990 ICM paper \cite{Lu1} he related
them to the geometry of the \emph{periodic Schubert varieties}: Lusztig's generic polynomials were declared to play 
in this context the role played by Kazhdan-Lusztig polynomials in the usual Schubert variety setting. Such a fact was
stated in \cite{Lu2} without a proof. A more precise statement (and its proof) involving Drinfeld's spaces of quasi
maps was given in \cite{FFKM}.

In the same year, independently, Feigin and Frenkel also considered these varieties, using the denomination
\emph{semi-infinite}, which seems to be now the preferred name in the literature. Their motivation to introduce 
the semi-infinite flag variety was a  geometric construction of a class of modules attached to any affine Kac-Moody
algebra, the Wakimoto modules, which had been previously considered in the $\widehat{\mathfrak{sl}_2}$-case by 
Wakimoto \cite{Wak}.  Wakimoto modules are  realised in \cite{FF} as zero extensions of constant sheaves on semi-infinite
Schubert cells. On the other hand, under the Beilinson-Bernstein and  Riemann-Hilbert correspondences, (dual) Verma
modules can be obtained as zero-extensions of constant sheaves on Schubert cells and we are hence allowed to think of
Wakimoto modules as the semi-infinite analogue of (dual) Verma modules.

We have already mentioned that Lusztig investigated in \cite{Lu2} \emph{a} periodic analogue of Kazhdan-Lusztig
basis elements. In fact, he studied \emph{two} families of polynomials appearing in this periodic context and
both of them could  be considered as a periodic analogue of Kazhdan-Lusztig polynomials: Lusztig's generic polynomials,
which have already appeared in this introduction, and Lusztig's periodic polynomials. We will recall their definition
in Section \ref{Sec_HeckeMods}. In the same way as Kazhdan-Lusztig polynomials control  Jordan-H\"older multiplicities
of Verma modules, also periodic and generic polynomials (are expected to) govern Jordan-H\"older multiplicities of 
certain standard objects in various representation categories (see \S\ref{Sec_Formulae}).

The idea of using moment graph techniques for solving multiplicity formula problems is due to Fiebig and it is 
motivated by Soergel's approach to the Kazhdan-Lusztig conjecture on the characters of irreducible modules for 
finite dimensional complex Lie algebras. The main point of such a strategy is to combinatorially describe intersection
cohomology groups of Schubert varieties. This can be done by either using the theory of Soergel bimodules or of 
Braden-MacPherson sheaves on Bruhat graphs.

Here we  show that the theory of Braden-MacPherson sheaves on semi-infinite graphs can be applied to calculate
local intersection cohomology of semi-infinite Schubert varieties (once made sense of them). This is the only new result
of this paper and its proof consists of combining results of  \cite{FFKM} and \cite{L15}. We hope to be able to obtain
a new proof, independent of \cite{FFKM}, in a forthcoming paper.

All in all, the aim of this paper is to convince the reader that it is extremely natural to consider  semi-infinite 
structures in representation theory and that a theory of sheaves on semi-infinite moment graphs  will have applications 
in different branches of representation theory. This is further motivated by results in \cite{BFGM}, \cite{ABBGM}, where 
the geometry of the spaces of quasi maps and of the semi-infinite flag manifold are used to study Lie algebras in 
positive characteristic and quantum groups at a root of unity.

\section{Bruhat order VS semi-infinite order}\label{Sec_BruhatVSSemiInfOrders}

Let $(W,{\mathcal S})$ be a Coxeter system. Every element $w$ of $W$ can be thought of as a (non-unique) word in the alphabet ${\mathcal S}$ and its
length $\ell(w)$ is the minimum number of letters necessary for writing such a word. 
Recall that the set of reflections of $W$ is ${\mathcal T}=\{wsw^{-1}\mid w\in W, \ s\in {\mathcal S} \}$ and the Bruhat order 
$\leq$ on $W$ is the partial order generated by the relations $w\leq t w$ if $\ell(w)<\ell(tw)$ for a $t\in {\mathcal T}$.

Since in this paper we are mainly interested in the representation theoretic side of the story, we will focus on the 
case of $W$ being the Weyl group of an affine Kac-Moody algebra. More precisely, let $\mathfrak{g}$ be a simple finite
dimensional complex Lie algebra, then we consider its affinisation ${\widehat{\mathfrak g}}$ . As a vector space, 
\[{\widehat{\mathfrak g}} \simeq {\mathfrak g}\otimes\mathbb{C}[t^{\pm 1}]\oplus \mathbb{C}K\oplus\mathbb{C}D\]
We are not going to recall the Lie algebra structure on ${\mathfrak g}$, which can be found, for example, in \cite{Kac}, but we 
limit ourselves to mention that $K$ is the central element and $D$ the derivation operator.

Recall that the Weyl group of ${\widehat{\mathfrak g}}$ together with the set of reflections indexed by simple affine reflections is a 
Coxeter system, so that we can consider $W$ endowed with the structure of a poset with respect to the Bruhat order. 

There is a further partial order we want to equip $W$ with, but in order to introduce it, we need to recall the alcove
picture. Let ${\mathfrak h}$ be a Cartan subalgebra of ${\mathfrak g}$, let  $R\subset {\mathfrak h}^*$, resp. $R^\vee\subset {\mathfrak h}$,  be the root, resp.
coroot,  system of ${\mathfrak g}$ and let $\Lambda=\mathbb{Z}R\subset {\mathfrak h}^*$, resp.  $\Lambda^\vee=\mathbb{Z}R^\vee\subset {\mathfrak h}$,  be its root, resp. coroot, lattice. Consider the Euclidean vector 
space $V:=\Lambda\otimes \mathbb{R}$. The Weyl group of ${\widehat{\mathfrak g}}$ can be hence realised  as the group of affine
transformations of $V$ generated by the reflections across the affine hyperplanes
\[H_{\alpha, n}:=\{ v\in V\mid \langle v, \alpha^\vee\rangle =n\}, \qquad (\alpha\in R^+, \ \  n\in \mathbb{Z})\]
where $\langle \cdot, \cdot \rangle: V\times V^* \rightarrow \mathbb{R}$ denotes the natural pairing, $R^+$ the set 
positive roots, and $\alpha^\vee\in R^\vee$ is the coroot corresponding to $\alpha$ (by abuse of notation, we denote
by $\alpha$, resp. $\alpha^\vee$, also its image in $\Lambda\otimes \mathbb{R}=V$, resp. in 
$\Lambda^\vee\otimes \mathbb{R}=V^*$). Thus the (left) action of $W$ on V is given by
\[ s_{\alpha, n}(v)=v-(\langle v,\alpha^\vee\rangle-n)\alpha. \]

The  connected components of $V\setminus\bigcup_{\alpha, n}H_{\alpha,n}$ are called alcoves and, since  $W$ acts
on the set of alcoves ${\mathscr A}$ freely and transitively,  $W$ and ${\mathscr A}$ are in bijection as sets. In order to make such a
bijection into an identification, we need to choose an alcove and look at its orbit under the $W$-action. Let $A_0^-$ be 
the only alcove which contains the origin in its closure and such that $\langle v,\alpha^\vee \rangle <0$  for a(ny)
$v\in A_0^-$ and all $\alpha\in R^+$. We identify $W$ with ${\mathscr A}$ via $w\mapsto w(A_0^-)$. Let ${\mathcal S}$ be the set of
reflections across the hyperplanes containing the walls of  $A_0^-$. Thus $(W,{\mathcal S})$ is a Coxeter system.

The semi-infinite (Bruhat) order, or Lusztig's generic order  \cite{Lu2}, on ${\mathscr A}$
is the partial order generated by the relations $A\leq_{\frac{\infty}{2}} s_{\alpha,n} A$ if $\langle v,\alpha^\vee \rangle >n$ 
for a(ny) $v\in A$ (where $\alpha\in R^+$ and $n\in \mathbb Z$).

\begin{remark} Observe that, while the poset $(W,\leq)$ has a minimal element (the identity), but not a maximal one,
the poset $({\mathscr A},\leq_\frac{\infty}{2})$ has neither a minimal nor a maximal element and it is hence unbounded in both directions. 
It is moreover clear that the semi-infinite order is stable under root translations, that is $A\leq_\frac{\infty}{2} B$ if and only 
if $A+\gamma\leq_\frac{\infty}{2} B+\gamma$ for any $\gamma\in\Lambda$.
\end{remark}

We conclude this section by recalling that the semi-infinite order can be thought of as a limit at $-\infty$ of the 
Bruhat order. The following lemma makes this statement precise.

\begin{lemma}[cf.{\cite[Claim 4.4]{SoeRep}}]\label{Lem_SeminfLimitBruhat} Let $\gamma\in \Lambda$ be such that 
$\langle \gamma,\alpha^\vee\rangle<0$ for any $\alpha\in R^+$ and let $A,B\in{\mathscr A}$. There exists a non-negative integer 
$n_0=n_0(A,B,\gamma)$
 such that the following are equivalent:
\begin{enumerate}
\item[(1)] $A\leq_\frac{\infty}{2} B$
\item[(2)] $A+n\gamma\leq B+n\gamma$ for all $n\geq n_0$. 
\end{enumerate}
\end{lemma}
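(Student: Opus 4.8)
The plan is to reduce a relation in either order to a sequence of elementary one--reflection steps, to translate such a step by $n\gamma$ by means of the identity
\[
s_{\alpha,m}(A)+n\gamma \;=\; s_{\alpha,\,m+n\langle\gamma,\alpha^\vee\rangle}(A+n\gamma)\qquad(\alpha\in R^+,\ m\in\mathbb Z,\ \gamma\in\Lambda),
\]
which is immediate from the formula for $s_{\alpha,m}$, and to exploit that strict antidominance of $\gamma$ drives the fundamental alcove far onto the ``positive side'' of every hyperplane. The local ingredient is the standard alcove description of the Bruhat order: for a reflection $t=s_{\alpha,m}$ and an alcove $A$ the alcoves $A$ and $tA$ are Bruhat--comparable, and $A<tA$ precisely when $A$ and $A_0^-$ lie on the same side of $H_{\alpha,m}$. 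Translating $A$ by $n\gamma$ shifts $\langle v,\alpha^\vee\rangle$ and the level $m$ by the \emph{same} amount $n\langle\gamma,\alpha^\vee\rangle$, so $A+n\gamma$ lies on the positive side of $H_{\alpha,\,m+n\langle\gamma,\alpha^\vee\rangle}$ exactly when $\langle v,\alpha^\vee\rangle>m$ for $v\in A$, independently of $n$; on the other hand $\langle v,\alpha^\vee\rangle-(m+n\langle\gamma,\alpha^\vee\rangle)\to+\infty$ for $v\in A_0^-$ because $\langle\gamma,\alpha^\vee\rangle<0$. Hence there is an integer $N(\alpha,m)\ge 0$ with
\[
A+n\gamma< s_{\alpha,m}(A)+n\gamma \ \text{ in the Bruhat order}\iff A<_{\frac{\infty}{2}}s_{\alpha,m}(A)\qquad(n\ge N(\alpha,m)).
\]

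For $(1)\Rightarrow(2)$: if $A\le_{\frac{\infty}{2}}B$, fix a chain $A=C_0,C_1,\dots,C_k=B$ of defining relations $C_i<_{\frac{\infty}{2}}C_{i+1}$, write $C_{i+1}=s_{\alpha_i,m_i}(C_i)$, and set $n_0:=\max_i N(\alpha_i,m_i)$ (and $n_0:=0$ if $A=B$). For $n\ge n_0$ the local statement turns each step into $C_i+n\gamma<C_{i+1}+n\gamma$, and concatenating gives $(2)$.

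For $(2)\Rightarrow(1)$ I would induct on the Bruhat distance $d(n):=\ell(B+n\gamma)-\ell(A+n\gamma)$. The first point is that $d(n)$ is constant for $n\gg 0$: by the Iwahori--Matsumoto/Bernstein length formula $\ell(t_\mu w)$ is piecewise linear in $\mu$ and becomes genuinely affine--linear once $\mu$ lies deep in the antidominant cone, so $d(n)$ stabilises; more strongly, the Bruhat interval $[A+n\gamma,B+n\gamma]$ is, up to translation by $\gamma$, independent of $n$ for $n$ large. Granting this, let $\mathcal F$ be the finite set of alcoves $E$ with $E+n\gamma\in[A+n\gamma,B+n\gamma]$ in the stable range, let $N^{*}$ be the maximum of $N(\alpha,m)$ over the finitely many reflections $s_{\alpha,m}$ realising covering relations between members of $\mathcal F$, and take $n_0$ the larger of $N^{*}$ and the threshold found in $(1)\Rightarrow(2)$. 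For $n\ge n_0$ a saturated Bruhat chain from $A+n\gamma$ to $B+n\gamma$ stays inside that interval; translating it back by $n\gamma$ produces $A=C_0,\dots,C_k=B$ with $C_{i+1}=s_{\alpha_i,m_i}(C_i)$, the $C_i$ in $\mathcal F$, and each step Bruhat--increasing, so the local statement read backwards gives $C_i<_{\frac{\infty}{2}}C_{i+1}$ and hence $A\le_{\frac{\infty}{2}}B$. (In the base case $d(n)=0$ one simply has $A+n\gamma=B+n\gamma$, so $A=B$.)

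The main obstacle is exactly the uniformity in $n$ in the last paragraph --- that a single $n_0$ suffices for all larger $n$ --- which I have reduced to the stabilisation of the Bruhat order (equivalently, of the affine length function and of Bruhat intervals) under translation by elements deep in the antidominant cone. Establishing that cleanly, either from Bernstein's presentation of the affine Hecke algebra or by importing it from \cite[Claim 4.4]{SoeRep}, is where I would concentrate the effort; the remainder is bookkeeping with the translation identity and the alcove description of the Bruhat order.
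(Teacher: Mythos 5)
The paper does not prove this lemma; it is quoted from Soergel \cite[Claim 4.4]{SoeRep}, so your argument can only be judged on its own terms. Your direction $(1)\Rightarrow(2)$ is complete and correct: the translation identity $s_{\alpha,m}(A)+n\gamma=s_{\alpha,m+n\langle\gamma,\alpha^\vee\rangle}(A+n\gamma)$, the characterisation ``$A<tA$ in Bruhat order iff $A$ and $A_0^-$ lie on the same side of the reflecting hyperplane'', and the observation that $A_0^-$ ends up on the positive side of $H_{\alpha,m+n\langle\gamma,\alpha^\vee\rangle}$ once $m+n\langle\gamma,\alpha^\vee\rangle<0$ (which happens for all $n\geq N(\alpha,m)$ since $\langle\gamma,\alpha^\vee\rangle<0$, and $-1<\langle v,\alpha^\vee\rangle<0$ for $v\in A_0^-$) together give exactly the elementary equivalence you state, and taking the maximum of finitely many thresholds along a generating chain finishes that implication.

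The converse is where the genuine gap sits, and you have correctly located it, but I want to stress that what you assume is not a routine ``stabilisation of the length function''. You invoke the statement that the whole Bruhat interval $[A+n\gamma,B+n\gamma]$ is, up to translation, independent of $n$ for $n\gg0$. That statement already contains the comparability stabilisation you are trying to prove (the interval is nonempty for all large $n$, or empty for all large $n$, iff (2) holds or fails uniformly), so the reduction defers essentially all of the content of $(2)\Rightarrow(1)$. Moreover it does not follow from the pairwise lemma by a finiteness argument: applying the lemma to all pairs in the finite semi-infinite interval $\{C\mid A\leq_\frac{\infty}{2} C\leq_\frac{\infty}{2} B\}$ only shows that these alcoves eventually sit inside $[A+n\gamma,B+n\gamma]$; to exclude ``new'' elements of the Bruhat interval appearing as $n$ grows one needs an a priori geometric confinement of $[A+n\gamma,B+n\gamma]$, which is precisely the missing ingredient. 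A cleaner way to close the argument, avoiding interval stabilisation altogether, is to prove: for $n\gg0$ every alcove occurring in a saturated Bruhat chain from $A+n\gamma$ to $B+n\gamma$ lies in the antidominant cone $\{v\mid\langle v,\beta^\vee\rangle<0\ \forall\beta\in R^+\}$. Then every hyperplane $H_{\beta,l}$ crossed by such a chain has $l<0$, hence has $A_0^-$ on its positive side; since each covering step $D\lessdot s_{\beta,l}D$ means $D$ and $A_0^-$ are on the same side of $H_{\beta,l}$, the step $D$ is on the positive side, and your translation identity converts the step directly into an elementary semi-infinite relation between $D-n\gamma$ and $s_{\beta,l}D-n\gamma$. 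This confinement of Bruhat intervals between two deeply antidominant alcoves is the one geometric fact you still need to establish (it is essentially what Soergel proves); until it is in place the implication $(2)\Rightarrow(1)$ is not proved.
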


\begin{remark}The reader should be aware that we defined the semi-infinite order using a convention which is opposite 
to the one in \cite{Lu2}and \cite{SoeRep}, but which agrees with the one in \cite{A}.
\end{remark}

\section{Inclusions of orbit closures}

Recall that the Bruhat order on Weyl groups comes from geometry. We briefly remind such a fact  in the case of an affine
Weyl group (see, for example, \cite{IM}).

 Let $G$ be the connected, simply connected, complex algebraic group with root system $R$. For any subgroup $M$ of
 $G$, denote by $M((t))$, resp. $M[[t]]$, the group  of $\mathbb{C}((t))$-, resp. $\mathbb{C}$[[t]],-points of $M$. For
 example, $G((t))$  is the loop group associated with $G$.  The Iwahori subgroup $I$ of $G((t))$ is the preimage of the 
 Borel subgroup $B\subseteq G$ (corresponding to $R^{+}$) under the map $G[[t]]\rightarrow G$ given by
 $t\mapsto 0$. The affine flag variety $\mathcal F l=G((t))/I$  is naturally equipped with a left action of $I$. The $I$-orbits 
 give the Iwahori  decomposition 
\[\mathcal F l= G((t))/I = \bigsqcup_{w\in W} X_w, \]

 where $X_w=IwI/I\simeq \mathbb{C}^{\ell(w)}$ is called Bruhat cell and its closure $\overline{X_w}$
is a Schubert variety, which turns out to be the union of the Schubert cells indexed by the elements which are 
less or equal than $w$ in the Bruhat order:
\[\overline{X_w}=\bigsqcup_{y\leq w} X_y.\]

Therefore the affine flag variety is an honest ind-scheme, stratified by (finite dimensional) $I$-orbits, whose
ind-structure is given by the Schubert varieties, which are in fact schemes of finite type.

In order to discuss the semi-infinite setting we first need to introduce some notation.  Let $N$ be the unipotent 
radical of $B$ and $T$ the maximal torus of $G$ such that $B=NT$. The semi-infinite flag variety, as introduced in 
\cite{Lu1} and \cite{FF}, is \[\mathcal F l^\frac{\infty}{2}= G((t))/N((t))T[[t]]. \]

Again, the Iwahori acts naturally on $\mathcal F l^\frac{\infty}{2}$ and its orbits (the \emph{semi-infinite Schubert cells}) are in 
bijection with the affine Weyl group. Unluckily, the space above is  not a good algebro geometric object, since it 
cannot be realised as an ind-scheme, as in the case of the affine flag variety: one could hope to play the same game 
as before  and get the ind-structure on  the semi-infinite flag variety from closures of $I$-orbits on it, but the
$I$-orbits have now  infinite dimension and  infinite codimension in $\mathcal F l^\frac{\infty}{2}$. Therefore  it is not even clear 
how to rigorously  define $I$-orbit closures in this setting and hence  \emph{semi-infinite Schubert varieties} 
$\overline{X^\frac{\infty}{2}_w}$. Anyway, if the closure $\overline{X^\frac{\infty}{2}_w}$ made sense, it should be equal to the
union of the $\overline{X^\frac{\infty}{2}_y}$ with $y\leq_\frac{\infty}{2} w$. In fact, in \cite[Section 5]{FFKM} it is proven that the adjacency
order on the set of Schubert strata  in the quasi map spaces (see\cite{FFKM} for the definition) is equivalent to the semi-infinite order. 
%This makes the semi-infinite order the (expected) semi-infinite analogue of the Bruhat order.
We conclude that the semi-infinite order should come from the inclusion relation of semi-infinite Schubert 
varieties (once made sense of them), in the same way as the Bruhat order coincides with the order relation given by the 
inclusion of closures of $I$-orbits  on the affine flag variety.

\section{Torus actions and moment graphs}\label{Sec_ToriAndMG}

Let $Y$ be a lattice, i.e. a free abelian group, of finite rank. A \emph{moment graph on $Y$} is a graph whose edges are labelled by non-zero elements
of $Y$ and whose set of vertices is equipped with the structure of a poset such that two vertices are comparable if 
they are connected. We assume moreover that there are no loops. 

Let us keep the same notation as in the previous sections. % and denote moreover by $\widehat R=R\oplus \mathbb Z\delta$  the set of affine roots. 
The extended torus $\widehat{T}:=T\times \mathbb C^\times$ acts on the affine flag variety 
$\mathcal F l$: $T$ by left multiplication on $G$ and $\mathbb C^\times$ by ``rotating the loop" (i.e., by rescaling the
variable $t$) and we can consider the 1-skeleton of such an action (cf. \cite[Chapter 7]{Ku}). The fixed point set 
consists of isolated points and it is in bijection with the affine Weyl group $W$ (each Schubert cell contains exactly
one fixed point), so that from now on we will identify $\mathcal F l^{\widehat T}$ with $W$. Moreover, the closure of any
1-dimensional orbit is smooth and contains exactly two fixed points. Therefore the set of $0$- and 1-dimensional obits
of the $\widehat{T}$-action gives us a graph.  The closure of each 1-dimensional orbit $\mathcal{O}\simeq \mathbb C^\times$ is a one-dimensional representation
of the extended torus and we can hence label the edge $\mathcal O$  by the corresponding character. 

Let us consider the finite dimensional Lie algebra $\mathfrak{g}:=\textrm{Lie}(G)$ and its Cartan subalgebra $\mathfrak{h}:=\textrm{Lie}(T)$, and denote, as in Section 2,  by $\widehat{\mathfrak{g}}$ its affinisation.  Let us write $\widehat{\mathfrak{h}}^*$ for the dual of the affinisation of $\mathfrak{h}$.
Recall that there exists an element $\delta\in\widehat{\mathfrak{h}}^*$ such that the set of real roots of the affine Kac-Moody group associated to $G$ (which is a central extension of $G((t))$) is equal to $\{\alpha+n\delta\mid \alpha\in R,\,n\in \mathbb{Z}\}$.
By  \cite[Chapter 7]{Ku},  two fixed points $x\neq y$ are in the closure of the same 1-dimensional orbit
$\mathcal O$ if and only if there exist $\alpha\in R^+$ and $n\in \mathbb Z$ such that $x=s_{\alpha,n}y$ and 
$\widehat T$ acts on $\overline{\mathcal O}$ via $\pm(\alpha+n\delta)$. Notice that the label of an edge is 
well-defined up to a sign. Anyway, once the sign is fixed, nothing will depend on this choice. Observe that if two
vertices of the graph are connected, then they are comparable with respect to the Bruhat order on the affine Weyl group. 
The \emph{affine Bruhat graph} $\mathcal G$ is the graph on $\mathbb Z\widehat R:=\mathbb{Z}R\oplus \mathbb{Z}\delta$ given by the previous data, together with the Bruhat
order on its set of vertices. Each full subgraph $\mathcal G_w$ of $\mathcal G$ having as a set of vertices the elements which are 
less or equal than a given $w\in W$ coincides with the moment graph associated by Braden and MacPherson  in \cite{BM}
to the Schubert variety $\overline{X_w}$, as a stratified variety with a \emph{nice enough}  $\widehat T$-action.

The extended torus $\widehat T$ acts on the semi-infinite flag variety too. The set of fixed points is once again 
in 1-1 correspondence with the set of $I$-orbits and, hence, we will identify it with $W$. Even if the geometric side 
is not yet rigorously defined or understood, we will pretend to be able to associate with the $\widehat T$-action on 
$\mathcal F l^\frac{\infty}{2}$ a moment graph on $\mathbb Z\widehat R$ . We hope to be able to make this construction more natural, by using
Drinfeld's spaces of quasi maps (see, for example, \cite{Br}), in a forthcoming paper. We hence define the
\emph{semi-infinite moment graph} $\mathcal G^\frac{\infty}{2}$ as follows: it has same set of vertices, edges and label function 
as $\mathcal G$, but the structure of poset on its set of vertices is given by the semi-infinite order.

\section{Structure algebras and cohomology of equivariantly formal spaces}\label{Sec_StructureAlgsEqvtCohom}

Let $\mathcal G$ be a moment graph on a lattice $Y$,  $k$  a field and $S$ the symmetric algebra of the $k$-vector space 
$Y\otimes k$.
We will write ${\mathcal V}$, respectively $\mathcal E$, for the set of vertices, respectively  edges, of $\mathcal G$.
For an edge $E\in \mathcal E$ we will denote by $l(E)\in Y$ its label.
 The structure algebra of $\mathcal G$ is 
\[
{\mathcal Z}:=\left\{
(f_x)\in \prod_{{\mathcal V}} S\ \Big| \ 
\begin{array}{c}
                                    f_x- f_y \in l(E) S \\
                                    \text{if }E=(x,y)\in \mathcal E
                                   \end{array}
\right\}.
\]

Notice that $S$ is diagonally embedded in ${\mathcal Z}$ and that ${\mathcal Z}$ is equipped with a structure of $S$-algebra, given by 
componentwise addition and multiplication.

Observe moreover that the definition of $\mathcal G$ is independent of the partial order on the set of vertices of $\mathcal G$.
This has in fact a geometric reason. In the previous section we saw an example of a moment graph arising as the 
1-skeleton of the action of an algebraic torus on a stratified variety. The partial order on the set of vertices was 
coming from the stratification, 
so forgetting about the partial order is somehow equivalent to forgetting about the stratification.
In fact, if the moment graph $\mathcal G$ coincides with the one skeleton of the action of an algebraic torus $T$ on 
a complex \emph{equivariantly formal variety}  $X$ (no stratification needed!), Goresky, Kottwitz and MacPherson
\cite{GKM98} showed that (for $\textrm{char }k=0$)
$H^\bullet_T(X, k)\simeq {\mathcal Z}$ as $\mathbb Z$-graded $S$-modules, where the $\mathbb Z$-grading on ${\mathcal Z}$ is induced by 
the $\mathbb Z$-grading 
on $S$ given by  $\deg Y:=2$. The structure of $S$-module on $H^\bullet_T(X, k)$ is given by the classical
identification of $S$ with $H^\bullet_T(\textrm{pt})$. Goresky, Kottwitz and MacPherson showed also that it is possible
to recover the usual cohomology just by change of base:  $$H^\bullet(X,k)\simeq {\mathcal Z}\otimes_S k.$$
A $T$-space is equivariantly formal if its $T$-equivariant cohomology is free as an $S$-module.
For example, a $T$-variety whose odd cohomology groups all vanish is equivariantly formal.  
So Schubert varieties are an example of equivariantly formal spaces. 

Let $w\in W$ be an element of the affine Weyl group and consider the graph $\mathcal G_w$ of the previous section.
If we denote by ${\mathcal Z}_w$ its structure algebra, then
${\mathcal Z}_w\simeq H^\bullet_{\widehat T}(\overline{X_w})$ and ${\mathcal Z}_w\otimes k \simeq H^\bullet(\overline{X_w})$. 
Taking inductive limits we also get ${\mathcal Z}\simeq H^\bullet_{\widehat T}(\mathcal F l) $, where now ${\mathcal Z}$ denotes the structure 
algebra of the affine Bruhat graph.

Recall that, once forgotten the partial order on the set of vertices, $\mathcal G$ and $\mathcal G^\frac{\infty}{2}$ coincide, so that their 
structure algebras also coincide.  We have not tried to make sense of the $\widehat T$-equivariant cohomology of
$\mathcal F l^\frac{\infty}{2}$ yet and do not know whether the equality of the structure algebras of  $\mathcal G$ and $\mathcal G^\frac{\infty}{2}$ has a 
rigorous geometric interpretation, but it is certainly compatible with  \cite[Proposition 1]{FF}.

\section{Hecke modules}\label{Sec_HeckeMods}

In this section we recall the definition of the Hecke algebra and of certain modules for the action of the affine
Hecke algebra, whose connection with representation theory will be discussed in Section \ref{Sec_Formulae}.
We use Soergel's notation and normalisation \cite{SoeRep}.

Denote by $\mathcal L$ the ring of Laurent polynomials in one variable with integer coefficients $\mathbb Z[v^{\pm 1}]$. The 
Hecke algebra $\mathcal H$ associated with the  Coxeter system $(W,{\mathcal S})$ is the free $\mathcal L$-module with basis $\{H_y\}$ 
indexed by $W$ and whose structure of associative $\mathcal L$-algebra (and hence of right $\mathcal H$-module over itself) is
uniquely determined by 
\begin{equation}\label{Eqn_HActionOnH}
H_y(H_s+v)=
\begin{cases}
H_{ys}+ vH_y&\text{ if } ys>y,\\
H_{ys}+ v^{-1}H_y&\text{ if } ys<y.
\end{cases}
\end{equation}
It follows that, for any simple reflection $s\in {\mathcal S}$, $H_s^2=(v^{-1}-v)H_s+H_e$ and hence $H_s^{-1}=H_s-(v^{-1}-v)$.
Moreover, if $y=s_{i_1}, \ldots s_{i_r}$, where $s_{i_1}, \ldots, s_{i_r}\in \mathcal{S}$ is a reduced expression 
(that is, $r=\ell(y)$), then $H_y=H_{s_{i_1}}\ldots H_{s_{_r}}$ and hence 
$H_y^{-1}=H_{s_r}^{-1}\ldots H_{s_1}^{-1}\in \mathcal H$ for any $y\in W$.
Thus we can define the bar involution  $\overline{ \cdot }:\mathcal H \rightarrow \mathcal H$,
which is the $\mathbb Z$-linear involutive automorphism of the affine Hecke algebra (as a $\mathbb Z$-algebra) given by: 
$v^{\pm 1}\mapsto v^{\mp 1}$ and $H_y\mapsto H_{y^{-1}}^{-1}$. The following is a classical and well-known result by 
Kazhdan and Lusztig. The formulation we give here is not the original one, but can be found, for example, 
in \cite{SoeRep}. 

\begin{theorem}[{\cite{KL79}, \cite[Theorem 2.1]{SoeRep}}]For any $w\in W$ there is a unique element $\underline{H}_w\in \mathcal H$ such that
$\overline{\underline{H}_w}=\underline{H}_w$ and $\underline{H}_w\in H_w +\sum_{y\in W\setminus \{w\}}v\mathbb Z[v] H_y$.
\end{theorem}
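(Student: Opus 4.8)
The plan is to prove existence and uniqueness separately, following the standard Kazhdan--Lusztig argument adapted to Soergel's normalisation. For \textbf{uniqueness}, suppose $\underline{H}_w$ and $\underline{H}'_w$ both satisfy the two conditions. Their difference $D:=\underline{H}_w-\underline{H}'_w$ is bar-invariant and lies in $\sum_{y\neq w}v\mathbb Z[v]H_y$. Writing $D=\sum_y d_y H_y$ with $d_y\in v\mathbb Z[v]$ and $d_w=0$, I would pick (if $D\neq 0$) a maximal element $z$ in the Bruhat order with $d_z\neq 0$. Since the bar involution is unitriangular with respect to the Bruhat order --- that is, $\overline{H_z}=H_z+\sum_{y<z}(*)H_y$, a fact that follows by induction on $\ell(z)$ from $\overline{H_s}=H_s+(v^{-1}-v)H_e$ and the multiplicativity $\overline{H_y}=\overline{H_{s_{i_1}}}\cdots\overline{H_{s_{i_r}}}$ --- comparing the coefficient of $H_z$ in $\overline D=D$ forces $\overline{d_z}=d_z$, so $d_z\in\mathbb Z[v]\cap v\mathbb Z[v]$ with $\overline{d_z}=d_z$, which is impossible unless $d_z=0$, a contradiction. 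Hence $D=0$.

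For \textbf{existence}, I would argue by induction on $\ell(w)$. The base case $w=e$ is $\underline{H}_e=H_e$, which is bar-invariant by definition. For the inductive step, choose $s\in{\mathcal S}$ with $ws<w$ and set $w'=ws$, so $\ell(w')<\ell(w)$ and $\underline{H}_{w'}$ exists by induction. Consider the element $\underline{H}_{w'}(H_s+v)$. This is bar-invariant: $\overline{H_s+v}=H_s^{-1}+v^{-1}=H_s-(v^{-1}-v)+v^{-1}=H_s+v$ by the relation $H_s^{-1}=H_s-(v^{-1}-v)$, and bar is an anti-involution that is in fact multiplication-compatible here since $\mathcal H$ is generated by the $H_s$. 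Using the defining action \eqref{Eqn_HActionOnH} and the expansion $\underline{H}_{w'}=H_{w'}+\sum_{y<w'}h_y H_y$ with $h_y\in v\mathbb Z[v]$, one computes that $\underline{H}_{w'}(H_s+v)=H_w+\sum_{y<w}c_y H_y$ for suitable $c_y\in\mathbb Z[v^{\pm1}]$; the leading term $H_w$ comes from $H_{w'}H_s=H_{w'\!s}=H_w$ since $w'\!s=w>w'$, and every other $H_y$ that appears has $y<w$.

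It remains to correct the lower-order coefficients $c_y$ that fail to lie in $v\mathbb Z[v]$ so as to land in $v\mathbb Z[v]$ while preserving bar-invariance. For this I would take the maximal $z<w$ with $c_z\notin v\mathbb Z[v]$; bar-invariance of $\underline{H}_{w'}(H_s+v)$ together with the unitriangularity of bar forces $\overline{c_z}-c_z\in\mathbb Z[v^{\pm1}]$ to be such that $c_z+\overline{c_z}\in\mathbb Z$ (more precisely $c_z-\overline{c_z}$ is determined and $c_z$ decomposes as an element of $v\mathbb Z[v]$ plus a bar-invariant integer-combination piece), so there is $p_z\in\mathbb Z[v+v^{-1}]$, in fact $p_z\in\mathbb Z$ adjusted suitably, with $c_z-p_z\in v\mathbb Z[v]$ and $\overline{p_z}=p_z$; subtract $p_z\underline{H}_z$ (which exists by induction and is bar-invariant). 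This strictly decreases the set of ``bad'' indices in the Bruhat order, so after finitely many steps we obtain the desired $\underline{H}_w$. The main obstacle is precisely this last bookkeeping: one must check that at each stage the offending coefficient, after using bar-invariance, is a genuine bar-invariant Laurent polynomial (so that a correction by a bar-invariant multiple of some already-constructed $\underline{H}_z$ is available) rather than something that cannot be cancelled --- this is exactly where the unitriangularity of the bar involution with respect to the Bruhat order does the essential work.
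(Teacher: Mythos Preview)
The paper does not prove this theorem at all: it is quoted as a classical result with references to \cite{KL79} and \cite[Theorem 2.1]{SoeRep}, and no argument is given. Your proposal is essentially the standard proof found in those references, so there is nothing in the paper to compare it against.

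That said, your write-up has one genuine soft spot. The uniqueness argument is fine. In the existence step, however, the correction procedure is muddled: the assertion that bar-invariance forces ``$c_z+\overline{c_z}\in\mathbb Z$'' is not correct as stated (take $c_z=v$), and the vague appeal to ``$p_z\in\mathbb Z[v+v^{-1}]$, in fact $p_z\in\mathbb Z$ adjusted suitably'' does not constitute an argument. The clean way --- and this is exactly what Soergel does --- is to first observe directly from the multiplication rule \eqref{Eqn_HActionOnH} that every coefficient $c_y$ in $\underline{H}_{w'}(H_s+v)=H_w+\sum_{y<w}c_yH_y$ already lies in $\mathbb Z[v]$ (the only possible $v^{-1}$ comes multiplied by some $h_y\in v\mathbb Z[v]$). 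Once you know $c_y\in\mathbb Z[v]$, you simply subtract $c_z(0)\,\underline{H}_z$ for each $z<w$, proceeding downward in the Bruhat order; no delicate use of bar-invariance is needed at this stage. With that fix your argument is the standard one.
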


The coefficients of the change of basis matrix $(h_{y,w})$ from $\{H_y\}$ to $\{\underline{H}_w\}$ are, by the above result, 
polynomials in $v$ and are called Kazhdan-Lusztig  polynomials. Notice that $H_s+v$ is self dual, so that
$\underline{H}_s=H_s+v$ and hence  \eqref{Eqn_HActionOnH} is in fact describing the right action of $\mathcal H$ on itself in 
terms of  multiplication by $\underline{H}_s$.

As in the previous sections, we now want to focus on the case of $W$ being an affine Weyl group. Let ${\mathcal S}$ denote the 
set of Coxeter generators for $W$ given  in Section \ref{Sec_BruhatVSSemiInfOrders} and  let $\mathcal H$ be the corresponding
Hecke algebra. We will briefly recall from \cite{Lu2} Lusztig's construction of three $\mathcal H$-modules. 

The periodic Hecke module ${\mathcal P}$ is the free $\mathcal L$-module with basis ${\mathscr A}$ and with a  structure of right $\mathcal H$-module
given by
\begin{equation}\label{Eqn_HActionOnP}
A\cdot \underline{H}_s=
\begin{cases}
As+ vA&\text{ if } As>_\frac{\infty}{2} A,\\
As+ v^{-1}A&\text{ if } As<_\frac{\infty}{2} A.
\end{cases}
\end{equation}
Notice that, once replaced the Bruhat order by the semi-infinite order, \eqref{Eqn_HActionOnH} and \eqref{Eqn_HActionOnP}
coincide.

In order to introduce the second $\mathcal H$-module we want to deal with, some more notation is needed. First, recall that in Section 2 
we have denoted by $A_0^-$ the anti-fundamental alcove, that is 
the only alcove which contains the origin in its closure and such that $\langle v,\alpha^\vee \rangle <0$  for a(ny)
$v\in A_0^-$ and all $\alpha\in R^+$. Let $W_0\subset W$ be
the finite Weyl group, which we identify with the stabiliser in $W$ of $0\in V$.  Let $Q \subset V$ be the set of 
integral weights, that is $Q=\{v\in V\mid \langle v,\alpha^\vee \rangle \in\mathbb Z \text{ for any }\alpha\in R\}$. For any  $\lambda\in Q$ we set
\[
E_\lambda:=\sum_{x\in W_0} v^{\ell(x)} x(A_0^-)+\lambda
\]
and we denote by ${\mathcal P}^0$ the $\mathcal H$-submodule of ${\mathcal P}$ generated by the set $\{E_\lambda\mid \lambda\in Q\}$.

A map $\psi:\mathcal M\rightarrow \mathcal N$ of $\mathcal H$-modules is called $\mathcal H$-skew linear if $f(m\cdot h)=f(m)\cdot \overline{h}$ for
any $m\in \mathcal M$ and $h\in \mathcal H$. In \cite{Lu2} it is shown that  there exists a unique $\mathcal H$-skew linear involution 
$\overline{ \cdot }:{\mathcal P}^0\rightarrow {\mathcal P}^0$ such that $\overline{E_\lambda}=E_\lambda$ for any $\lambda\in Q$. 

In this setting the analogue of Kazhdan-Lusztig's Theorem is the following result.
\begin{theorem}[\cite{Lu2}]For any $A\in {\mathscr A}$ there is a unique element $\underline{P}_A\in {\mathcal P}^0$ such that
$\overline{\underline{P}_A}=\underline{P}_A$ and $\underline{P}_A\in A+\sum_{B\in {\mathscr A}\setminus \{A\}}v\mathbb Z[v] B$.
\end{theorem}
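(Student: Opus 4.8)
The plan is to mimic the standard existence-and-uniqueness argument for Kazhdan-Lusztig basis elements, now carried out inside the periodic module ${\mathcal P}^0$ equipped with the skew-linear bar involution $\overline{\,\cdot\,}$ constructed by Lusztig. \emph{Uniqueness} is the easy direction and should be disposed of first: if $\underline{P}_A$ and $\underline{P}_A'$ both satisfy the two conditions, then their difference $D=\underline{P}_A-\underline{P}_A'$ is bar-invariant and lies in $\sum_{B\neq A}v\mathbb{Z}[v]B$. Writing $D=\sum_B d_B B$ with $d_B\in v\mathbb{Z}[v]$, bar-invariance forces each $d_B$ to be fixed by $v\mapsto v^{-1}$, but a nonzero element of $v\mathbb{Z}[v]$ cannot be symmetric under $v\leftrightarrow v^{-1}$, so $D=0$. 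The only subtlety here is that the coefficients $d_B$ need not all be bar-invariant individually \emph{a priori} — the bar involution on ${\mathcal P}^0$ is only $\mathcal H$-skew linear, not coefficient-wise — so one must run the usual minimality/triangularity argument: pick $B$ minimal (with respect to $\leq_\frac{\infty}{2}$) among indices with $d_B\neq 0$ and argue that the bar involution is ``lower-triangular with $1$'s on the diagonal'' in a suitable sense, forcing $\overline{d_B}=d_B$ and hence a contradiction.

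For \emph{existence}, the natural approach is downward induction on the semi-infinite order, starting from the fact that ${\mathcal P}^0$ is generated over $\mathcal H$ by the bar-invariant elements $E_\lambda$, together with the description of the $\underline{H}_s$-action in \eqref{Eqn_HActionOnP}. First I would record that each $E_\lambda$ has leading term $x_0(A_0^-)+\lambda$ for the longest element, and more usefully that the elements $E_\lambda \cdot \underline{H}_{y}$, as $\lambda$ and $y$ vary, span ${\mathcal P}^0$ and have ``unitriangular'' leading behaviour with respect to $\leq_\frac{\infty}{2}$. Then, given $A\in{\mathscr A}$, one produces \emph{some} bar-invariant element $C_A\in {\mathcal P}^0$ of the form $A + \sum_{B}c_B B$ (for instance by taking an appropriate $E_\lambda\cdot\underline{H}_y$ whose top term is $A$ — here one uses that the semi-infinite action of $\underline{H}_s$ in \eqref{Eqn_HActionOnP} is formally identical to the regular action \eqref{Eqn_HActionOnH}, so reduced-word manipulations carry over). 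This $C_A$ need not satisfy the positivity condition $c_B\in v\mathbb{Z}[v]$ for $B\neq A$, so one corrects it: by downward induction the elements $\underline{P}_B$ for the finitely many $B>_\frac{\infty}{2} A$ that actually occur with nonzero coefficient are already available, and one subtracts suitable bar-invariant combinations $\sum_{B>_\frac{\infty}{2} A}\overline{\eta_B(v)}\,\underline{P}_B$ (with $\eta_B\in\mathbb{Z}[v]$ chosen to kill the part of $c_B$ violating positivity, using that $q-\overline{q}$ for $q\in\mathbb{Z}[v]$ exhausts the antisymmetric Laurent polynomials) until all off-diagonal coefficients land in $v\mathbb{Z}[v]$. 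Bar-invariance is preserved throughout since we only ever add bar-invariant elements.

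The one genuinely delicate point — and the place where the semi-infinite setting differs from the classical one — is that the semi-infinite order $\leq_\frac{\infty}{2}$ has \emph{no minimal elements}, so there is no honest base case for an induction ``from below'' and one must be careful that the induction ``from above'' actually terminates. The resolution is the local finiteness built into the structure of ${\mathcal P}^0$: any element of ${\mathcal P}^0$ is an $\mathcal L$-combination of finitely many $E_\lambda\cdot \underline{H}_y$, and by Lemma \ref{Lem_SeminfLimitBruhat} the semi-infinite order restricted to the (finite) support of such an element is interval-finite, so the set $\{B : B>_\frac{\infty}{2} A,\ c_B\neq 0\}$ appearing at each correction step is finite and the recursion bottoms out. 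I expect verifying this finiteness and the attendant triangularity claims to be the main obstacle; once it is in place, the existence argument is the routine Kazhdan-Lusztig bootstrap and uniqueness is immediate. In fact, since \eqref{Eqn_HActionOnP} shows the right $\mathcal H$-action on ${\mathcal P}$ is formally the same as the regular representation \eqref{Eqn_HActionOnH} after replacing $\leq$ by $\leq_\frac{\infty}{2}$, essentially all the formal manipulations of \cite[Theorem 2.1]{SoeRep} transfer verbatim, and one could even present the proof as a reduction to that theorem applied ``locally'' on finite intervals.
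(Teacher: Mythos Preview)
The paper does not actually prove this theorem: it is stated with attribution to \cite{Lu2} and no argument is supplied, as the paper is largely expository at this point. So there is no ``paper's own proof'' to compare against; what you have written is essentially a sketch of Lusztig's original argument (the standard Kazhdan--Lusztig bootstrap transported to ${\mathcal P}^0$), and in outline it is the right approach.

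That said, one directional slip in your existence argument is worth flagging. If $C_A$ is chosen so that $A$ is the \emph{maximal} alcove in its support with coefficient $1$ (which is what one obtains from products $E_\lambda\cdot\underline{H}_y$), then the remaining terms $B$ with $c_B\neq 0$ satisfy $B<_{\frac{\infty}{2}}A$, not $B>_{\frac{\infty}{2}}A$. The correction step therefore requires the $\underline{P}_B$ for $B$ \emph{below} $A$, and the induction is over the finite support of $C_A$ ordered by $\leq_{\frac{\infty}{2}}$, treating the $\leq_{\frac{\infty}{2}}$-maximal $B$ first (where triangularity of the bar involution forces $c_B=\overline{c_B}$), then descending. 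Your instinct that local finiteness of the support is what rescues the induction in the absence of global extrema is correct; just be careful that the triangularity of $\overline{A}=A+\sum_{B<_{\frac{\infty}{2}}A} r_{B,A}B$ (which is the genuine content one must extract from Lusztig's construction of the involution) points downward, so the bookkeeping runs the other way from what you wrote. The uniqueness argument has the same issue: pick $B$ \emph{maximal}, not minimal, in the support of $D$.
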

Moreover, the set $\{\underline{P}_A\mid A\in {\mathscr A}\}$ is an  $\mathcal L$-basis of ${\mathcal P}^0$ and the polynomials 
$p_{B,A}\in v\mathcal L$ defined by $\underline{P}_A=\sum_{B\in {\mathscr A}}p_{B,A}B$ are called \emph{periodic polynomials}.

The definition of the third $\mathcal H$-module we want to deal with needs a sort of completion of $\mathcal P$. 
We say that a map $f:{\mathscr A}\rightarrow \mathcal L$ is  bounded from above if there exists an alcove $C\in {\mathscr A}$ such that whenever
$f(A)\neq 0$, then $A\leq_\frac{\infty}{2} C$. We consider the space of all bounded from above maps:
\[
\widehat {\mathcal P}:=\{f:{\mathscr A}\rightarrow \mathcal L \mid  f \text{ is bounded from above}\}
\]
and we identify it with a set of formal $\mathcal L$-linear combinations of alcoves via $f\mapsto \sum f(A) A$. At this point 
it is clear that it is possible to extend the right action of $\mathcal H$ and the  $\mathcal H$-skew-linear involution on ${\mathcal P}$ to
$\widehat {\mathcal P}$. The second semi-infinite variation of Kazhdan-Lusztig's Theorem is hence the  following:
\begin{theorem}[\cite{Kato}]For any $A\in {\mathscr A}$ there is a unique element $\tilde{\underline{P}}_A\in \widehat{\mathcal P}$ such 
that  $\overline{\tilde{\underline{P}}_A}=\tilde{\underline{P}}_A$ and
$\tilde{\underline{P}}_A\in A+\sum_{B\in {\mathscr A}}v^{-1}\mathbb Z[v^{-1}] B\in\widehat {\mathcal P}$.
\end{theorem}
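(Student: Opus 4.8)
The statement to be proved is the existence and uniqueness of the self-dual element $\tilde{\underline{P}}_A\in\widehat{\mathcal P}$ with $\tilde{\underline{P}}_A\in A+\sum_{B\in{\mathscr A}}v^{-1}\mathbb Z[v^{-1}]B$. The plan is to follow the classical Kazhdan--Lusztig argument, adapted to the completion $\widehat{\mathcal P}$ and to the semi-infinite order; the only real novelty compared to Theorem (Lusztig) on $\underline P_A$ is that the ``defect'' lattice $v^{-1}\mathbb Z[v^{-1}]$ sits on the other side, so the bounded-from-above completion is exactly what makes the recursion converge.

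First I would record uniqueness. Suppose $x,x'$ are two elements with the stated properties. Then $d:=x-x'$ is self-dual and lies in $\sum_{B}v^{-1}\mathbb Z[v^{-1}]B$, with the coefficient of any $A$ appearing in $x$ (resp.\ $x'$) being $1$, so the coefficient of such $A$ in $d$ lies in $v^{-1}\mathbb Z[v^{-1}]$ with no constant term. A standard argument shows a self-dual element of $\widehat{\mathcal P}$ all of whose coefficients lie in $v^{-1}\mathbb Z[v^{-1}]$ must be $0$: pick a $\leq_\frac{\infty}{2}$-maximal $B$ with $d_B\neq 0$ (this exists because $d$ is bounded from above); applying the $\mathcal H$-skew-linear involution and using that the bar involution fixes the leading term $B$ while only modifying lower terms, one gets $\overline{d_B}=d_B$, forcing $d_B\in v^{-1}\mathbb Z[v^{-1}]\cap v\mathbb Z[v]\cup\mathbb Z=0$, a contradiction. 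Hence $d=0$.

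For existence I would argue by induction along the semi-infinite order. Fix $A$. Since multiplication by $\underline H_s$ acts on $\widehat{\mathcal P}$ by formulas \eqref{Eqn_HActionOnP} and is self-dual, starting from any self-dual element and multiplying by $\underline H_s$ produces a self-dual element. The classical trick: choose a sequence $A=A_0,A_1,\dots,A_m$ with $A_m$ as small as possible in a suitable finite interval and $A_{i-1}=A_i s_i$ with $A_{i-1}<_\frac{\infty}{2}A_i$, build $C:=A_m\underline H_{s_m}\cdots\underline H_{s_1}$, which is self-dual and equals $A+(\text{terms }\leq_\frac{\infty}{2}A\text{ with coefficients in }v\mathbb Z[v])+\cdots$; then correct by subtracting self-dual elements $\tilde{\underline P}_B$ for $B<_\frac{\infty}{2}A$, which exist by induction, to kill all coefficients not lying in $v^{-1}\mathbb Z[v^{-1}]$. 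The subtlety is that ``lower'' in $\widehat{\mathcal P}$ means lower in $\leq_\frac{\infty}{2}$, but the semi-infinite order has infinite descending chains below $A$; what rescues the induction is that at each step the elements appearing have coefficients that already lie in $v^{-1}\mathbb Z[v^{-1}]$ for all but finitely many $B$, and those finitely many ``bad'' $B$ form a lower-dimensional configuration, so the correction process terminates and stays inside $\widehat{\mathcal P}$.

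I expect the main obstacle to be precisely this convergence/termination issue: showing that the successive corrections only ever involve finitely many alcoves with coefficient outside $v^{-1}\mathbb Z[v^{-1}]$, and that the infinite tails that one does pick up remain bounded from above so that the result genuinely lies in $\widehat{\mathcal P}$ and not in some larger completion. This is handled by an induction on the number of hyperplanes separating $A$ from a fixed reference alcove, combined with the observation that applying $\underline H_s$ changes separation by a controlled amount; essentially one transplants Lusztig's proof for ${\mathcal P}^0$ but replaces the role of the generators $E_\lambda$ by the completion. One then cites \cite{Kato} for the fact that this construction is well-posed and records that the coefficients $\tilde p_{B,A}$ defined by $\tilde{\underline P}_A=\sum_B\tilde p_{B,A}B$ lie in $\mathbb Z[v^{-1}]$ (with $\tilde p_{A,A}=1$), the second family of semi-infinite Kazhdan--Lusztig polynomials.
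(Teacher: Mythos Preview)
The paper does not prove this theorem; it is stated with attribution to \cite{Kato} and no argument is given. So there is no ``paper's own proof'' to compare against --- the result is quoted from the literature.

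That said, your sketch has a real gap. Your existence argument starts from $C:=A_m\,\underline H_{s_m}\cdots\underline H_{s_1}$ and asserts this is self-dual, but a bare alcove $A_m\in\widehat{\mathcal P}$ is \emph{not} self-dual: the involution on ${\mathcal P}^0$ (and its extension to $\widehat{\mathcal P}$) is normalised so that $\overline{E_\lambda}=E_\lambda$, not $\overline{A}=A$. In the classical Hecke-algebra proof the base case is $H_e=1$, and in Lusztig's proof for $\underline P_A$ the base case is $E_\lambda$; you have not identified an analogue here, and without one the product $C$ need not be bar-invariant. Relatedly, your claim that $C$ lands in $A+(\text{lower terms with coefficients in }v\mathbb Z[v])$ is inconsistent with the target condition $v^{-1}\mathbb Z[v^{-1}]$: applying $\underline H_s$ to move \emph{down} in $\leq_{\frac{\infty}{2}}$ produces a $v^{-1}$ on the \emph{higher} term, not a $v$ on the lower one, so the bookkeeping of which side of the degree filtration you are on needs to be redone.

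The deeper structural problem is the induction itself. For $\underline P_A$ one inducts upward from the self-dual $E_\lambda$ sitting at the bottom of a finite interval; for $\tilde{\underline P}_A$ the defect condition is on the $v^{-1}$ side and the order has no minimum, so there is no base alcove to start from. Kato's original argument does not run a naive KL recursion in $\widehat{\mathcal P}$; rather it passes through the stabilisation of ordinary affine Kazhdan--Lusztig polynomials under antidominant translation (essentially the mechanism behind Lemma~\ref{Lem_SeminfLimitBruhat}), which simultaneously furnishes the self-dual element and explains why the infinite tail is controlled. Your uniqueness paragraph is fine, but for existence you should either invoke that stabilisation argument or, if you insist on an intrinsic recursion, replace the fictitious self-dual alcove $A_m$ by a genuine self-dual seed (e.g.\ an $E_\lambda$ translated appropriately) and then carefully track why the corrections converge in $\widehat{\mathcal P}$.
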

We are now ready to define the last family of polynomials we are interested in in this paper: 
the \emph{generic polynomials} $q_{B,A}\in\mathbb Z[v]$ are such that $\tilde{\underline{P}}_A = \sum_{B\in {\mathscr A}\setminus \{A\}} \overline{q_{B,A}} B$, where the latter is allowed to be
(and indeed it is) an infinite sum.

\section{Multiplicity formulae}\label{Sec_Formulae}

\subsection{Affine Kac-Moody algebras}Let $\widehat{{\mathfrak g}}$ denote the affinisation of the simple Lie algebra ${\mathfrak g}$ as in Section \ref{Sec_BruhatVSSemiInfOrders} and
recall that $K$ is a fixed generator of the central line and $D$ the derivation operator. 
%The choice of a root datum for ${\mathfrak g}$ as in the previous sections uniquely determines a Cartan and a Borel subalgebras ${\mathfrak h}\subset {\mathfrak b}\subset {\mathfrak g}$. Let $\widehat {\mathfrak h}$ and $\widehat {\mathfrak b}$ be the corresponding affine Cartan and Borel subalgebras of $\widehat {\mathfrak g}$ (so, in particular, $\widehat {\mathfrak h}\simeq {\mathfrak h}\oplus\mathbb C K\oplus \mathbb C D$).
Let  ${\mathfrak h}\subset {\mathfrak b}$ be a Cartan and a Borel subalgebra of ${\mathfrak g}$ (corresponding to $R^+$) and 
let $\widehat {\mathfrak h}$ and $\widehat {\mathfrak b}$ be the corresponding affine Cartan and Borel subalgebras of $\widehat {\mathfrak g}$ (so, in particular, $\widehat {\mathfrak h}\simeq {\mathfrak h}\oplus\mathbb C K\oplus \mathbb C D$).
The affine BGG category $\mathcal O$ is the full subcategory of $\widehat{\mathfrak g}$-modules on which $\widehat {\mathfrak h}$ acts semi-simply and $\widehat {\mathfrak b}$ locally finitely.
Then the affine BGG category $\mathcal O$  decomposes in levels $\mathcal O_\kappa$, $\kappa\in\mathbb C$, according to 
the action of the central element $K$. Let us fix an element $\rho$ in the dual of the Cartan $\widehat{\mathfrak h}^*$ 
 having the property that $\rho(\gamma^\vee)=1$ for any simple affine root $\gamma$. Observe that $\rho$ is uniquely determined only up to a multiple of the smallest positive imaginary root $\delta$.
This is in fact not a problem, since nothing is going to depend on this choice.  The critical level is the value $\textrm{crit}:=-\rho(K)$.

For any weight $\lambda\in \widehat{\mathfrak h}^*$, we denote by $\Delta(\lambda)$ and $L(\lambda)$ the  Verma module and  
the irreducible representation of highest weight $\lambda$, respectively.

\subsubsection{Negative level} Let us consider a regular weight $\lambda$ with $\lambda(\gamma^\vee)\in\mathbb{Z}_{<-1}$ for any simple affine root $\gamma$. Then we have the following affine version of the Kazhdan-Lusztig conjecture:
\begin{equation}\label{Eqn_NegativeKL}
[\Delta(y\cdot \lambda):L(w\cdot \lambda)]=\tilde{h}_{y,w}(1)
\end{equation}
where, for any $x\in W$, the $\rho$-shifted action of $W$ on $\widehat {\mathfrak h}^*$ is defined as 
$x\cdot \lambda:=x(\lambda+\rho)-\rho$, and $\tilde{h}_{y,w}$ denotes the inverse  Kazhdan-Lusztig polynomial, that is the polynomial such that $\sum_{y}(-1)^{\ell(u)-\ell(y)} h_{u,y}\tilde{h}_{y,w}=\delta_{u,w}$.
The above multiplicity statement was proven by Kashiwara and Tanisaki using the theory of $\mathcal D$-modules on the 
affine flag variety \cite{KasTan}. An alternative, algebraic proof can be deduced from \cite{EW}, where Elias and Williamson
prove a more general statement about indecomposable Soergel bimodules, known as Soergel's conjecture. We will briefly
discuss the moment graph formulation of Soergel's conjecture later in Section \ref{Sec_BMPMultiplicities}.

\subsubsection{Positive level} If a regular weight $\lambda$ is such that $\lambda(\gamma^\vee)\in\mathbb{Z}_{> -1}$ for any simple affine root $\gamma$, then it is enough to substitute $\tilde{h}_{y,w}$ in the multiplicity statement \eqref{Eqn_NegativeKL} by the Kazhdan-Lusztig polynomial $h_{y,w}$. Such a result was proven by Kashiwara and Tanisaki \cite{KasTanPos} using once again the theory
of $\mathcal D$-modules, but in this case, they had to deal with sheaves of differential operators on Kashiwara's thick 
flag variety, whose definition we do not want to recall. It is sufficient for us to mention  that such a variety, as
$\mathcal F l$ and $\mathcal F^\frac{\infty}{2}$, is an appropriate quotient of a loop group, on which the Iwahori acts with orbits of infinite
dimension (but finite codimension). As for the negative level case, an algebraic proof of the affine Kazhdan-Lusztig 
conjecture at a positive level can be deduced from \cite{EW}. 

\subsubsection{Critical level}\label{Subsubsec_CriticalLevel} Finally, we want to look at the case of a dominant
regular $\lambda\in\widehat {\mathfrak h}^*$ such that $\lambda(K)=\textrm{crit}$. 
(Moreover, $\lambda$ should satisfy some technical conditions that we do not want to list and that can be found in 
\cite[\S1.4]{AF}).
We will state a (still in general conjectural) further multiplicity formula involving restricted Verma modules. 
Tensoring by the one-dimensional representation $L(\delta)$ is an autoequivalence of $\mathcal O$ and a block is
stabilised by such an equivalence if and only if it is of critical level, so that tensoring by $L(\delta)$ is also an
auto-equivalence of any critical block. 
In order to define restricted Verma modules the notion of graded centre of a critical block is needed.
Its degree $n$ part consists of the space of natural transformations from the functor $\cdot \otimes L(n\delta)$ to the 
identity functor on 
the block (satisfying certain extra conditions, which are described, for example, in \cite[\S1.4.]{AF}).
Then the restricted Verma module $\overline{\Delta}(\lambda)$ is obtained by quotienting $\Delta(\lambda)$ by the ideal
generated by
the homogeneous components of degree $\neq 0$ of the centre. The following formula has been conjectured 
by Feigin, Frenkel and Lusztig, independently:
\begin{equation}\label{Eqn_CriticalFFL}
[\overline{\Delta}(y\cdot \lambda):L(w\cdot \lambda)]=p_{y(A_0^-),w(A_0^-)}(1).
\end{equation}

\subsubsection{Wakimoto modules} We leave to \cite{AL} the discussion of a multiplicity formula for Wakimoto modules at a
non critical level involving Lusztig's generic polynomials evaluated at one.

\subsection{Quantum groups at a root of unity}\label{Subsec_QuantumGps} Let ${\mathfrak g}$ be again a simple complex finite 
dimensional Lie algebra and 
consider (Lusztig's version of) its quantum group at a $p$-th root of unity \cite{Lu90b}, where $p$ is an odd integer 
(prime to 3 in the $G_2$-case).  Denote by  $\mathbf{u}_p({\mathfrak g})$ the (finite dimensional) small quantum group.  
Then  $\mathbf{u}_p({\mathfrak g})$ admits a triangular decomposition and the standard objects  $\{Z(\mu)\}$ in this 
setting have the same realisation as Verma modules for ${\mathfrak g}$: they are obtained by inflating a linear form of the 
0-part to the positive part and then inducing it to the whole $\mathbf{u}_p({\mathfrak g})$. The module $Z(\mu)$ has a unique 
simple quotient, that we denote $L(\mu)$. %(The module $Z(\mu)$  coincides with $\overline{M}_p(\mu)$ from \cite{DCK}). 

Let $p$ be greater than the Coxeter number of ${\mathfrak g}$, and $\lambda\in {\mathfrak h}^*$ such that 
$\langle\lambda, \alpha^\vee\rangle<-1$ for any simple root $\alpha$ of ${\mathfrak g}$ and 
$\langle\lambda,\varphi^{\vee}\rangle> p-1$, where $\varphi$ is the highest root of ${\mathfrak g}$. Denore by $\rho_0$ half the sum of all positive roots. We consider the $p$-dilated 
$\rho_0$-shifted action $\cdot_p$ of the affine Weyl group on ${\mathfrak h}^*$:  this means that we shift by $-\rho_0$ the action of
the affine Weyl group $W$ which has been rescaled in such a way that 
$s_{\alpha,n}(v)=v-(\langle v,\alpha^\vee\rangle-pn)\alpha$. Then the following multiplicity formula is a restatement of
Lusztig's conjecture on type 1 finite dimensional modules for quantum groups at a root of unity (cf. \cite{Lu90b}):
\begin{equation}\label{Eqn_QuantumGpsRoots1}
[Z(y\cdot_p \lambda):L(w\cdot_p \lambda)]=p_{y(A_0^-),w(A_0^-)}(1),
\end{equation}
where $y\cdot \lambda$ is dominant and  $w$ is a minimal element in $w\textrm{Stab}_{W\cdot_p}(\lambda)$.
% (clearly, the stabiliser is with respect to the $\cdot_p$-action)

\subsection{$G_1T$-modules}\label{Subsec_G1TMods}Let $k$ be a field of characteristic $p$,  $G$  a semisimple simply
connected algebraic group over $k$ and  $G_1\subset G$ be the kernel of the Frobenius.  Fix once and for all a Borel 
subgroup B, a maximal torus  $T\subseteq B$, and denote by ${\mathfrak b}$ and ${\mathfrak h}$ the corresponding Lie algebras. The Lie
algebra ${\mathfrak g}$ of $G$ is a $p$-algebra and we denote by $U^\textrm{res}({\mathfrak g})$ its restricted Lie algebra, which is a 
finite dimensional quotient of the enveloping algebra $U({\mathfrak g})$ (see, for example \cite[Introduction]{AJS}). Moreover, 
let $G_1T\subset G$ be the group scheme generated by $T$ and $G_1$. The representation category we are interested in has 
as objects the finite dimensional $G_1T$-representations, which can be identified with $U^\textrm{res}({\mathfrak g})$-modules 
graded by the group of characters $X$ of $T$. Let $\mu\in X$, and differentiate it to get $\overline{\mu}\in{\mathfrak h}^*$. The 
Baby Verma module $Z(\mu)$ is defined as the induction to the whole $U^\textrm{res}({\mathfrak g})$ of the $X$-graded 
one-dimensional $U({\mathfrak b})$-representation concentrated in degree $\mu$ (which is obtained, as usual, by inflating to 
$U({\mathfrak b})$ the one-dimensional ${\mathfrak h}$-module $k_{\overline{\mu}}$). Denote by $L(\mu)$ the unique simple quotient of 
$Z(\mu)$. As in \S \ref{Subsec_QuantumGps},  let $W$ be the affinisation of the Weyl group of $G$ and, as in \S\ref{Subsec_QuantumGps},  denote by $\cdot$ 
the $\rho_0$-shifted  action of it on ${\mathfrak h}^*$. Then the $G_1T$-version of Lusztig's conjecture on the characters of 
modular representations (of regular restricted highest weights) is the following (cf. \cite[Conjecture 3.4]{Fie07})
\begin{equation}\label{Eqn_G1TModules}
[Z(y\cdot 0):L(w\cdot 0)]=p_{y(A_0^-),w(A_0^-)}(1),
\end{equation}
for $w,y\in W$ and $w$ such that $-p<\langle w\cdot 0, \alpha^\vee\rangle \leq 0 $. For $p\gg 0$, in \cite{AJS} the 
above multiplicity formula is derived from the quantum group at a root of unity analogue, and  an explicit huge bound on
$p$, depending on the root system of $G$, for the statement to be true was found by Fiebig in \cite{Fie12}. Till
June 2013, when Williamson announced the first counterexample \cite{W},  Lusztig's conjecture was expected to hold for 
$p$ greater than the Coxeter number $h$ of $G$ (this was Kato's hope \cite{Kato}, even more optimistic than Lusztig's 
suggestion of $p\geq 2h-3$). A modified conjecture is not available yet, so that new tools are now needed.

%In \cite{AJS} is proven that for $p\gg 0$ the same multiplicity formula as for quantum groups at a root
%of unity holds, once

\section{Sheaves on moment graphs} Let $\mathcal G$ be a moment graph on $Y$ and $S$ the symmetric algebra of $Y\otimes k$ for
a given field $k$, $\mathbb Z$-graded as in Section \ref{Sec_StructureAlgsEqvtCohom}. Denote by 
$S-\textrm{mods}^{\mathbb Z}$ the category of $\mathbb Z$-graded $S$-modules.  A sheaf $\mathscr F$ on $\mathcal G$ is given by two
collections of $\mathbb Z$-graded $S$-modules
\[
( \mathscr F^x\in S-\textrm{mods}^{\mathbb Z})_{x\in {\mathcal V}}\ , \quad ( \mathscr F^E\in S-\textrm{mods}^\mathbb{Z} \mid l(E)\mathscr F^E=(0) )_{E\in \mathcal E}
\]
and a collection of maps of $\mathbb Z$-graded $S$-modules
\[
(\rho^{x,E}: \mathscr F^x\rightarrow \mathscr F^E)_{\substack{E\in\mathcal E\qquad\qquad\\ x \textrm{ is a vertex of }E}}.
\]
For a vertex $x$, we will often refer to the module $\mathscr F^x$ as the stalk in $x$ of $\mathscr F$. 
A morphism $f$ between two sheaves $\mathscr F_1$ and $\mathscr F_2$ on $\mathcal G$ consists of two collections of morphisms of 
$\mathbb Z$-graded $S$-modules
\[
( f^x:\mathscr F_1^x\rightarrow \mathscr F_2^x)_{x\in {\mathcal V}}\ ,  \quad (f^E: \mathscr F_1^E\rightarrow \mathscr F_2^E )_{E\in \mathcal E}
\]
compatible with the $\rho$-maps, that is $f^E\circ\rho_1^{x,E}=\rho_2^{x,E}\circ f^x$ for any edge $E$ and any vertex
$x$ of $E$.

The set of sections of $\mathscr F$ over  $\mathcal I\subseteq {\mathcal V}$,  is 
\[
\Gamma(\mathcal I, \mathscr F):=\left\{
(f_x)\in \prod_{x\in\mathcal I} \mathscr F^x\ \Big| \ 
\begin{array}{c}
                                   \rho^{x,E}(f_x)= \rho^{y,E}(f_y)\\
                                    \text{if }E=(x,y)\in \mathcal E
                                   \end{array}
\right\}.
\]
Notice that $\Gamma(\mathcal I,\mathscr F)$ has naturally a structure of $\mathbb Z$-graded $S$- and ${\mathcal Z}$-module, on which $S$ acts 
diagonally and ${\mathcal Z}$ componentwise.

Observe that the structure algebra can be realised as the set of global sections (i.e, sections over the whole ${\mathcal V}$) of
the sheaf $\mathscr Z$ given by 
\[
( \mathscr Z^x= S)_{x\in {\mathcal V}}\ , \quad ( \mathscr Z^E= S/l(E)S )_{E\in \mathcal E}
\]
and the $\rho$-maps  $\rho^{x,E}:S\rightarrow S/l(E)S$ (for $x$ vertex of $E$)  are the canonical quotient maps.

\subsection{Braden-MacPherson sheaves} We recall here the definition of a class of sheaves on a moment graph introduced by Braden and
MacPherson \cite{BM} in order to compute the intersection cohomology of varieties acted upon by an algebraic torus $T$, 
which are equivariantly formal and equipped with a $T$-stable stratification (see \cite[\S1.1]{BM} for the exact 
assumptions on the varieties). Their construction makes sense also for moment graphs not coming from geometry.

Recall that, by definition, the vertex set of any moment graph $\mathcal G$ is equipped with a partial order $\leq$. For a vertex
$x\in {\mathcal V}$, denote by $\{>x\}$ the set of elements which are strictly greater than $x$. Let $\mathscr F$ be a sheaf and call
$\rho^{\delta x}$ the following composition of maps:
\[
\Gamma(\{>x\}, \mathscr F)\hookrightarrow \bigoplus_{y>x}\mathscr F^y\twoheadrightarrow  \bigoplus_{\substack{y>x \textrm{ s.t.}\\ E=(x,y)\in\mathcal E}}\mathscr F^y \stackrel{\oplus \rho^{y,E}}{\longrightarrow}  \bigoplus_{\substack{ E=(x,y)\in\mathcal E \\\textrm{s.t. } y>x }}\mathscr F^E.
\]
We denote by $\mathscr F^{\delta x}$ the $\mathbb Z$-graded $S$-module $\rho^{\delta x}(\Gamma(\{>x\}, \mathscr F))$. 

For any $w\in {\mathcal V}$, the indecomposable Braden-MacPherson sheaf  $\mathscr B(w)$ is inductively defined as follows. We 
start by setting:
\[
\mathscr{B}(w)^y=(0) \ \hbox{if $y\not\leq w$}\ , \qquad \mathscr{B}(w)^w\simeq S,
\]
\[
\mathscr{B}(w)^{(y,z)}=(0)\hbox{ and }  \rho^{y,(y,z)}=\rho^{z,(y,z)}=0  \ \hbox{ if $y \not\leq w$ or $z\not \leq w$}
\]
and we then assume that $\mathscr{B}(w)^y$ and $\mathscr{B}(w)^E$ have been constructed for any vertex $y>x$ and 
$E=(y,z)\in \mathcal E$ with $y,z>x$. We define 
\[\mathscr{B}(w)^{E}=\mathscr{B}(w)^y/l(E)\mathscr{B}(w)^y \qquad\hbox{if }E=(x,y) \hbox{ and }y>x 
\]
and, for such an $E$, the morphism  $\rho^{y,E}: \mathscr{B}(w)^y\rightarrow \mathscr{B}(w)^y/l(E)\mathscr{B}(w)^y$ is 
the canonical quotient map. Now it is possible to consider the module $\mathscr{B}(w)^{\delta x}$ and to take its
projective cover (which always exists in the category of $\mathbb Z$-graded $S$-modules):
\[
p_x:\mathscr{B}(w)^x\rightarrow \mathscr{B}(w)^{\delta x}\  \hbox{is a projective cover}.
\]
Obviously, for $(x,z)\in E$ and $z>x$, one obtains the map $\rho^{x,(x,z)}$ as the composition of the following
morphisms
\[
\mathscr{B}(w)^x\stackrel{p_x}{\rightarrow} \mathscr{B}(w)^{\delta x}\hookrightarrow   \bigoplus_{\substack{ E=(x,y)\in\mathcal E \\\textrm{s.t. } y>x }}\mathscr B^E \twoheadrightarrow \mathscr B^{(x,z)}.
\]

\section{Braden-MacPherson sheaves and intersection cohomology} 

As already mentioned, Braden-MacPherson sheaves were introduced with the aim of providing a combinatorial algorithm to
compute equivariant intersection cohomology, with coefficients in a field $k$ of characteristic zero,  of a sufficiently nice complex algebraic variety $X$ equipped with the 
action of an algebraic torus $T$. More precisely, with any such a variety one can associate a moment graph $\mathcal G$ on the  character lattice $Y$ of $T$, whose set of vertices is given by the $T$-fixed points $X^T$ (see \cite[\S1.2]{BM}). The
order on the set of vertices is induced by a fixed stratification on $X$ having the property that any stratum contains 
exactly one fixed point and hence there is a unique maximal vertex $\bar v$. Let  $S$ be the symmetric algebra of the vector space $Y\otimes k$. If $X$ satisfies all the assumptions in \cite[\S1.1]{BM}, then there are canonical
identifications: (cf \cite[Theorem 1.5  and Theorem 1.6]{BM}):
\[
IH^\bullet_T(X)\simeq \Gamma({\mathcal V}, \mathscr B(\bar v))  \qquad \hbox{as ${\mathcal Z}$-modules}, 
\]
and hence also as $S$-modules, and, for each point $x\in X$,
\[
IH^\bullet_T(X)_x\simeq  \mathscr B(\bar v)^y  \qquad \hbox{as $S$-modules}, 
\]
where $y\in X^T$ is the fixed point contained in the same stratum as $x$. As the previous modules are all free over $S$, 
non-equivariant global and local intersection cohomology are obtained by base change:
\[
IH^\bullet(X)\simeq \Gamma({\mathcal V}, \mathscr B(\bar v))\otimes_S k\ ,\qquad  IH^\bullet_T(X)_y\simeq  \mathscr B(\bar v)^x\otimes_S k
\]
as $k$-vector spaces.

For example, any affine Schubert variety $\overline{X_w}$ satisfies the assumptions in \cite[\S1.1.]{BM} and we can
consider the moment graph $\mathcal G_w$ as in Section \ref{Sec_ToriAndMG}. It has a unique maximal vertex, $w$, and hence 
\[IH^\bullet_{\widehat T}(\overline{X_w})\simeq \Gamma({\mathcal V}, \mathscr B(w)) \ ,\hbox{ and } \qquad IH^\bullet_{\widehat T}(\overline{X_w})_y\simeq  \mathscr B(w)^y \hbox{ for any $y\leq w$}\]

In positive characteristic (under some technical assumption on $k$), Braden-MacPherson sheaves compute hypercohomology
of parity complexes on $X$ \cite{FieW}. Parity complexes are not perverse  in general  and this fact is related to the 
presence of torsion in the intersection cohomology groups of Schubert varieties. The discovery of these torsion 
phenomena  allowed Williamson to produce many counter-examples to Lusztig's modular conjecture \cite{W}. 
%In this paper, we will not discuss this topics in further details.

\section{Local intersection cohomology of $\mathcal F l^\frac{\infty}{2}$} 
Let $M$ be a finitely generated,  free $\mathbb Z$-graded $S$-module, then there are integers $j_1, \ldots j_r$, 
uniquely determined up to reordering, such that $M=\bigoplus^r_{i=1} S[j_i]$ (where the shift in the grading is such 
that $M[j]_{n}=M_{j+n}$). With such an $M$ we can associate its graded rank, that is a Laurent polynomial in $v$ which
keeps track of the shifts: $\underline{\textrm{rk}}M:=\sum^r_{i=1} v^{-j_i}\in\mathbb Z[v^{\pm 1}]$. Let $\mathcal G$ be a moment graph. Observe that for any pair of vertices  $w, y\in{\mathcal V}$ the stalk $\mathscr B(w)^y$ is by 
construction finitely generated and free as a $\mathbb Z$-graded  $S$-module, so that it makes sense to consider its graded rank.

Let $\textrm{char} k=0$. In \cite{L15} we investigated the stable moment graph $\mathcal G^\textrm{stab}$, which is the
full subgraph of $\mathcal G^\frac{\infty}{2}$ having as set of vertices the set ${\mathscr A}^-$ of alcoves such that 
if $v\in A\in {\mathscr A}^-$ then $\langle v, \alpha^\vee\rangle<0$ for any simple finite root $\alpha$.
(To be precise, in \cite{L15} we were dealing with the upside down setting, since we had defined the semi-infinite
order by giving the affine hyperplanes an orientation which is opposite to the one considered here). For any alcove 
$A\in {\mathscr A}^-$, denote by $\mathscr B^{\textrm{stab}}(A)$ the corresponding indecomposable Braden-MacPherson sheaf,
where the ``$\textrm{stab}$'' is there to remind us that we are considering sheaves on $\mathcal G^\textrm{stab}$. 

Let $\delta(A,B)$ be Lusztig's semi-infinite length function (see \cite{Lu2}). The main result of \cite{L15} is the following:
\begin{theorem}\label{Thm_La15} Assume that $A, B\in {\mathscr A}^-$ are \emph{deep enough} in ${\mathscr A}^-$, then
\[\underline{\textrm{rk}}\ \mathscr B^\textrm{stab}(A)^B=v^{\delta(A,B)} q_{B,A}.\]
\end{theorem}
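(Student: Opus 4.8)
The plan is to reduce the statement to the known combinatorial identification between generic polynomials and Kazhdan-Lusztig polynomials "far out" in the affine Weyl group, and then transport the Braden-MacPherson stalk computation through the limit procedure of Lemma \ref{Lem_SeminfLimitBruhat}. First I would recall from \cite{FFKM} that the generic polynomials $q_{B,A}$ agree, once $A$ and $B$ are deep enough in ${\mathscr A}^-$, with ordinary affine Kazhdan-Lusztig polynomials $h_{y,w}$ for suitable $y,w\in W$ depending on $A,B$ and on a fixed antidominant translation $\gamma$; this is the content of the statement in \cite[Section 5]{FFKM} relating the adjacency order on Schubert strata in quasi-map spaces to the semi-infinite order, together with the coincidence of the corresponding local intersection cohomology Poincar\'e polynomials. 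The role of the shift $v^{\delta(A,B)}$ is bookkeeping: it records the difference between Lusztig's semi-infinite length $\delta(A,B)$ and the actual length difference $\ell(w)-\ell(y)$ of the translated alcoves inside an honest affine Schubert variety.

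Next I would invoke Theorem \ref{Thm_La15} itself --- that is, the main result of \cite{L15} --- which identifies $\underline{\textrm{rk}}\,\mathscr B^{\textrm{stab}}(A)^B$ with $v^{\delta(A,B)}q_{B,A}$ via a purely moment-graph-theoretic argument: the point is that the full subgraph $\mathcal G^{\textrm{stab}}$ of $\mathcal G^\frac{\infty}{2}$ supported on ${\mathscr A}^-$, restricted to an interval that is deep enough, is isomorphic as a labelled moment graph (edges, labels, and the induced order) to an interval in the affine Bruhat graph $\mathcal G$ sitting inside some $\mathcal G_w$; this is exactly Lemma \ref{Lem_SeminfLimitBruhat} applied locally. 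Since the Braden-MacPherson construction in Section 9 depends only on the labelled moment graph with its order --- and not on any geometry --- the indecomposable sheaf $\mathscr B^{\textrm{stab}}(A)$ restricted to $\{\,\leq_\frac{\infty}{2} A\,\}$ is carried to $\mathscr B(w)$ restricted to $\{\,\leq w\,\}$ under this isomorphism, up to an overall grading shift by $v^{\delta(A,B)}$. Hence $\underline{\textrm{rk}}\,\mathscr B^{\textrm{stab}}(A)^B = v^{\delta(A,B)}\,\underline{\textrm{rk}}\,\mathscr B(w)^y$, and the right-hand side equals $v^{\delta(A,B)}h_{y,w}$ by the positive-characteristic-free case of the Braden-MacPherson theorem recalled in Section 10 (valid since $\textrm{char}\,k=0$), which in turn equals $v^{\delta(A,B)}q_{B,A}$ by the first step.

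I expect the main obstacle to be making precise the phrase \emph{deep enough} and checking that the various ``depth'' conditions coming from three different sources are mutually compatible: the $n_0=n_0(A,B,\gamma)$ of Lemma \ref{Lem_SeminfLimitBruhat}, which controls when the semi-infinite interval $[B,A]_\frac{\infty}{2}$ embeds order-isomorphically into a Bruhat interval; the condition that the \emph{labels} of the edges also match (not just the order), which requires that no hyperplane of the form $H_{\alpha,n}$ with small $|n|$ interferes, hence a further translation; and the condition in \cite{FFKM} under which $q_{B,A}$ stabilises to an affine Kazhdan-Lusztig polynomial. One must verify that a single translation by a sufficiently large multiple of an antidominant $\gamma$ simultaneously achieves all three, and that the resulting $w$ lies in the ``nice enough'' range so that $\mathcal G_w$ genuinely computes $IH^\bullet_{\widehat T}(\overline{X_w})$. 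A secondary, more technical point is keeping careful track of the grading shift: one has to confirm that the shift appearing in the moment-graph isomorphism is exactly $\delta(A,B)$ as defined by Lusztig, which amounts to comparing Lusztig's semi-infinite length with $\ell(w)-\ell(y)$ for the translated alcoves --- a computation that is essentially the definition of $\delta$ but should be stated cleanly. Everything else --- the functoriality of the Braden-MacPherson construction under moment-graph isomorphism, and the freeness needed to pass from modules to graded ranks --- is formal.
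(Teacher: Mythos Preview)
This theorem is not proved in the present paper: it is quoted, without proof, as the main result of \cite{L15} and then used as a black box in the argument for Lemma~\ref{Lem_SeminfBMPMult}. There is therefore no proof here against which to compare your proposal.

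That said, your sketch has two concrete defects. First, it is circular as written: in your second paragraph you say you ``would invoke Theorem~\ref{Thm_La15} itself --- that is, the main result of \cite{L15}'', which is precisely the statement under discussion. If you mean to summarise what you believe the argument of \cite{L15} to be, say so; as phrased, you are assuming the conclusion. Second, your attribution of the stabilisation $q_{B,A}=h_{y,w}$ to \cite{FFKM} is misplaced. That identity---that generic polynomials arise as limits of affine Kazhdan--Lusztig polynomials under antidominant translation---is a purely Hecke-algebraic fact proved by Lusztig in \cite{Lu2}; it requires no geometry of quasi-map spaces. This matters because the introduction of the present paper explicitly flags the goal of obtaining results \emph{independently} of \cite{FFKM}, so routing the argument through \cite{FFKM} would undercut the point of \cite{L15}.

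With those corrections, the strategy you outline is coherent: for $A,B$ deep enough, the full subgraph of $\mathcal G^{\textrm{stab}}$ on the semi-infinite interval $[B,A]$ literally coincides (same vertices, same edges, same labels, and by Lemma~\ref{Lem_SeminfLimitBruhat} the same order) with the full subgraph of the affine Bruhat graph $\mathcal G$ on the Bruhat interval; the Braden--MacPherson algorithm depends only on this data, so $\mathscr B^{\textrm{stab}}(A)^B\cong\mathscr B(w)^y$; and then \eqref{Eqn_MultiplicityConj} together with Lusztig's stabilisation yields the claim. Note, though, that no grading shift is introduced by the moment-graph identification itself---the labels are identical, since $\mathcal G$ and $\mathcal G^{\frac{\infty}{2}}$ share the same underlying labelled graph---so your remark ``up to an overall grading shift by $v^{\delta(A,B)}$'' is not quite right: the factor $v^{\delta(A,B)}$ enters only through the comparison of the length normalisation in \eqref{Eqn_MultiplicityConj} with Lusztig's semi-infinite length, not through the transport of sheaves. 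Your worry about matching edge labels ``after a further translation'' is likewise misplaced for the same reason: no translation is needed once $A,B$ are already deep, and when a translation \emph{is} used (as in the proof of Lemma~\ref{Lem_SeminfBMPMult}) one must appeal to a notion of moment-graph isomorphism, as in \cite{L12}, that accommodates the resulting $\delta$-shift in the labels.
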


Next, consider the semi-infinite graph $\mathcal G^\frac{\infty}{2}$ and the Braden-MacPherson sheaves on it. Since for any $A\in {\mathscr A}$
the set of elements less than $A$ with respect to $<_\frac{\infty}{2}$ is infinite, the algorithm described in the previous 
section will never end. Nevertheless, all intervals $[B,A]:=\{C\in W\mid B\leq_\frac{\infty}{2} C \leq_\frac{\infty}{2} A\}$ have finite cardinality and hence it is, in principle,  possible to compute $\mathscr B^{\frac{\infty}{2}}(A)^B$, where, again, $\frac{\infty}{2}$ reminds us that we are dealing with sheaves on $\mathcal G^\frac{\infty}{2}$.  Taking their graded rank returns again Lusztig's generic polynomials.

\begin{lemma}\label{Lem_SeminfBMPMult}
$\underline{\textrm{rk}}\ \mathscr B^\frac{\infty}{2}(A)^B=v^{\delta(A,B)} q_{B,A}.$
\end{lemma}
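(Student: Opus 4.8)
The plan is to deduce Lemma~\ref{Lem_SeminfBMPMult} from Theorem~\ref{Thm_La15} by a locality argument: the Braden--MacPherson algorithm on a moment graph is local in the sense that the stalk $\mathscr B(A)^B$ depends only on the subgraph supported on the interval $[B,A]$. Concretely, if $A,B\in{\mathscr A}$ satisfy $B\leq_\frac{\infty}{2} A$, then at every step of the inductive construction of $\mathscr B^\frac{\infty}{2}(A)$ the modules $\mathscr B^\frac{\infty}{2}(A)^y$, $\mathscr B^\frac{\infty}{2}(A)^E$ and the maps $\rho^{y,E}$, $\rho^{\delta y}$, $p_y$ attached to vertices $y$ and edges $E$ lying in the interval $[B,A]$ are determined purely by the combinatorics (poset structure and edge labels) of the full subgraph of $\mathcal G^\frac{\infty}{2}$ on $[B,A]$ --- because the construction only ever looks at elements $>y$ that are $\leq A$, and those, together with $y$, all lie in $[B,A]$ once $y\geq_\frac{\infty}{2} B$. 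The same remark holds verbatim for $\mathcal G^\textrm{stab}$ with $A,B\in{\mathscr A}^-$. So the first step is to state and prove this locality lemma precisely, isolating the claim that $\mathscr B(A)^B\cong \mathscr B'(A)^B$ whenever $\mathcal G$ and $\mathcal G'$ are two moment graphs agreeing (as labelled posets) on the interval $[B,A]$.

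Next I would use the translation-invariance of the semi-infinite order recorded in the Remark after the definition of $\leq_\frac{\infty}{2}$, together with Lemma~\ref{Lem_SeminfLimitBruhat}, to reduce the general case to the ``deep enough'' case covered by Theorem~\ref{Thm_La15}. Given arbitrary $A,B\in{\mathscr A}$ with $B\leq_\frac{\infty}{2} A$, choose $\gamma\in\Lambda$ with $\langle\gamma,\alpha^\vee\rangle<0$ for all $\alpha\in R^+$ and translate the whole interval: for $n$ large enough, $A+n\gamma$ and $B+n\gamma$ both lie in ${\mathscr A}^-$ and are ``deep enough'' in the sense required by Theorem~\ref{Thm_La15}, while the translation map $C\mapsto C+n\gamma$ induces an isomorphism of labelled posets from $[B,A]$ (in $\mathcal G^\frac{\infty}{2}$) onto $[B+n\gamma,A+n\gamma]$ (in $\mathcal G^\textrm{stab}$): the order is preserved by the translation-stability remark, and the edge labels $\pm(\alpha+n\delta)$ are unchanged because root translations act trivially on the linear part of a reflection. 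Here one must also check that $\delta(A,B)=\delta(A+n\gamma,B+n\gamma)$, which is immediate from the translation-invariance of Lusztig's semi-infinite length function, and that $q_{B,A}=q_{B+n\gamma,A+n\gamma}$, which is the translation-invariance of the generic polynomials (these depend only on the relative position of the two alcoves, cf.\ \cite{Lu2}).

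Combining these two steps gives the chain of identifications
\[
\underline{\textrm{rk}}\ \mathscr B^\frac{\infty}{2}(A)^B
=\underline{\textrm{rk}}\ \mathscr B^\textrm{stab}(A+n\gamma)^{B+n\gamma}
=v^{\delta(A+n\gamma,B+n\gamma)}q_{B+n\gamma,A+n\gamma}
=v^{\delta(A,B)}q_{B,A},
\]
where the first equality is locality plus the labelled-poset isomorphism, the second is Theorem~\ref{Thm_La15}, and the third is translation-invariance. Finally, if $B\not\leq_\frac{\infty}{2} A$ then $\mathscr B^\frac{\infty}{2}(A)^B=(0)$ by the very first line of the Braden--MacPherson construction, so there is nothing to prove; and one should note that the statement is insensitive to the sign ambiguity in the edge labels, since replacing a label by its negative does not change any of the modules $S/l(E)S$ appearing in the algorithm.

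The main obstacle I expect is the locality lemma --- making rigorous the claim that the Braden--MacPherson construction on $[B,A]$ does not ``see'' the rest of the graph. The subtlety is the projective cover step $p_y\colon\mathscr B(A)^y\to\mathscr B(A)^{\delta y}$: one must be sure that $\mathscr B(A)^{\delta y}$, which a priori involves sections over the infinite set $\{>y\}$, in fact only involves the finitely many vertices $z$ with $y\leq_\frac{\infty}{2} z\leq_\frac{\infty}{2} A$ once we are computing the stalk at a vertex $B\leq_\frac{\infty}{2} y$. This works because $\mathscr B(A)^z=(0)$ for $z\not\leq_\frac{\infty}{2} A$, so the direct sums $\bigoplus_{z>y}\mathscr B(A)^z$ appearing in the definition of $\rho^{\delta y}$ collapse to $\bigoplus_{y<z\leq A}\mathscr B(A)^z$; and all such $z$, together with $y$ itself, lie in $[B,A]$ provided $B\leq_\frac{\infty}{2} y$. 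An induction downward from $A$ through the finite interval $[B,A]$ then shows that every datum of $\mathscr B^\frac{\infty}{2}(A)$ restricted to $[B,A]$ agrees with the corresponding datum of $\mathscr B^\textrm{stab}(A+n\gamma)$ restricted to $[B+n\gamma,A+n\gamma]$, which is exactly what we need. Once this bookkeeping is in place, the rest is a routine assembly of the translation-invariance statements.
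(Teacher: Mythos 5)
Your argument is essentially the paper's own proof: the paper also reduces to the \emph{deep enough} case of Theorem~\ref{Thm_La15} by translating the interval $[B,A]$ by an antidominant $\gamma$ into ${\mathscr A}^-$, the only difference being that it obtains your ``locality plus labelled-poset isomorphism'' step by observing that translation induces an isomorphism $\mathcal G^\frac{\infty}{2}_{|[B,A]}\to\mathcal G^\frac{\infty}{2}_{|[B+\gamma,A+\gamma]}$ and invoking \cite[Lemma 5.1]{L12}, whereas you prove the locality of the Braden--MacPherson construction directly. Your additional remarks on the translation-invariance of $\delta(\cdot,\cdot)$ and of the generic polynomials are correct and are used implicitly in the paper.
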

\begin{proof}If $A,B\in{\mathscr A}^-$ and they are both \emph{deep enough} in ${\mathscr A}^-$, then the statement coincides with the statement 
of  Theorem \ref{Thm_La15}.  Otherwise, there exists an antidominant $\gamma$  such that $[B+\gamma,A+\gamma]\subset {\mathscr A}^-$ and $A,B$ are \emph{deep enough} in ${\mathscr A}^-$. For any two alcoves $C,D\in {\mathscr A}$, denote by $\mathcal G^\frac{\infty}{2}_{|[C,D]}$ the full subgraph of $\mathcal G^{\frac{\infty}{2}}$ having as set of vertices the interval $[C,D]$.  There is an induced morphism of moment graphs $T_{\gamma}:\mathcal G^\frac{\infty}{2}_{|[A,B]}\rightarrow \mathcal G^\frac{\infty}{2}_{|[A+\gamma,B+\gamma]}$ which is in fact an isomorphism in the sense of \cite{L12}. By \cite[Lemma 5.1]{L12}, $\mathscr B^\frac{\infty}{2}(A)^B\cong \mathscr B^\frac{\infty}{2}(A+\gamma)^{B+\gamma}$ and now the lemma follows from Theorem \ref{Thm_La15}.
\end{proof}

We want to relate all of this to the  geometry of the semi-infinite flag variety. 
In \cite{FFKM}  the singularities of $\mathcal F l^\frac{\infty}{2}$ are investigated via the study of Drinfeld's 
spaces of quasi maps. For any $\gamma\in \mathbb Z R^\vee$, let us denote by ${\tt Q}^{\gamma}$ the space of 
quasi maps of degree $\gamma$ from a curve $\mathcal C$ of genus zero to the flag variety $G/B$ 
(where $R^\vee$ is the set of coroots of $G$).
 We are not going to recall the definition 
of these objects which can be found in, 
for example,  \cite{FFKM}. Let $w,y\in W=\mathbb Z R^\vee \rtimes W_0$, then $w=t_{\lambda}u$ and $y=t_{\mu}v$, for $\lambda, \mu\in\mathbb Z R^\vee$ 
and $u,v\in W_0$, then we set
\[
IH^\bullet\Big(\overline{X^\frac{\infty}{2}_w}\Big)_y:=IH^\bullet({\tt Q}^{-\lambda}_{u})_{-\mu, v}\ ,
\]
where $IH^\bullet({\tt Q}^{-\lambda}_{u})_{-\mu, v}$ denotes the stalk of 
$IH({\tt Q}^{-\lambda}_{u})$ at the generic point of ${\tt Q}^{-\mu}_{v}$ 
(see \cite[6.4.4.]{FFKM} for the definition of the closed variety ${\tt Q}^{\eta}_{z}$). Our choice of orientation
of the affine hyperplanes, opposite to the one in \cite{FFKM}, requires the ``-" sign in front of $\mu$ and $\lambda$.

The following result had been anticipated in \cite[\S11]{Lu1} and proved in \cite{FFKM}: 
\begin{theorem}%[{\cite[Theorem 6.9]{FFKM}}]
For any pair $y,w\in W$
\[
\sum_i \text{\emph{dim} } IH^{2i}\Big(\overline{X^\frac{\infty}{2}_w}\Big)_y v^i= v^{\delta(y(A_0^-),w(A_0^-))}q_{y(A_0^-), w(A_0^-)}.\]
\end{theorem}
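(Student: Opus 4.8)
The plan is to combine the two ingredients already assembled in the paper: the moment-graph computation of Lemma~\ref{Lem_SeminfBMPMult}, which identifies $\underline{\mathrm{rk}}\ \mathscr B^\frac{\infty}{2}(A)^B$ with $v^{\delta(A,B)}q_{B,A}$, and the fact — proven in \cite{FFKM} — that the singularities of the (hypothetical) semi-infinite Schubert varieties are modelled by Drinfeld's quasi-map spaces, for which the local intersection cohomology is governed by Lusztig's generic polynomials. So the statement to be proved is essentially a translation: the left-hand side $\sum_i \dim IH^{2i}\big(\overline{X^\frac{\infty}{2}_w}\big)_y\, v^i$ is \emph{defined} (in the displayed formula preceding the theorem) to be $\sum_i \dim IH^{2i}({\tt Q}^{-\lambda}_u)_{-\mu,v}\, v^i$, where $w=t_\lambda u$, $y=t_\mu v$; and the content is that this equals $v^{\delta(y(A_0^-),w(A_0^-))}q_{y(A_0^-),w(A_0^-)}$.

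First I would unwind the definition of $IH^\bullet\big(\overline{X^\frac{\infty}{2}_w}\big)_y$ and reduce the statement to a statement purely about quasi-map spaces: the Poincaré polynomial of the stalk of $IH({\tt Q}^{-\lambda}_u)$ at the generic point of ${\tt Q}^{-\mu}_v$ equals $v^{\delta(y(A_0^-),w(A_0^-))}q_{y(A_0^-),w(A_0^-)}$. Then I would invoke \cite[Section~5]{FFKM}, where it is shown that the adjacency order on Schubert strata in the quasi-map spaces coincides with the semi-infinite order (as recalled in the section ``Inclusions of orbit closures''), and the main intersection-cohomology computation of \cite{FFKM}, anticipated in \cite[\S11]{Lu1}, which expresses these local IH stalks in terms of Lusztig's generic polynomials $q_{\cdot,\cdot}$, up to the normalising shift by the semi-infinite length function $\delta$. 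The only bookkeeping required is to match conventions: our choice of orientation of the affine hyperplanes is opposite to that of \cite{FFKM}, which is exactly why the signs $-\lambda,-\mu$ appear in the definition of $IH^\bullet\big(\overline{X^\frac{\infty}{2}_w}\big)_y$ and why the indices $y(A_0^-)$, $w(A_0^-)$ (rather than their images under some involution) appear on the right. One checks that under $w=t_\lambda u\mapsto w(A_0^-)$ and $y=t_\mu v\mapsto y(A_0^-)$ the pair $(-\mu,v)$-stratum inside ${\tt Q}^{-\lambda}_u$ corresponds to the interval $[y(A_0^-),w(A_0^-)]$ with respect to $\leq_\frac{\infty}{2}$, so the generic polynomial indexed by the quasi-map strata in \cite{FFKM} is precisely $q_{y(A_0^-),w(A_0^-)}$ in our normalisation.

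The main obstacle is purely expository rather than mathematical: it is the careful alignment of three sets of conventions — those of \cite{Lu1,Lu2}, those of \cite{FFKM}, and the ones fixed in this paper (which follow \cite{A}) — so that the shift $v^{\delta(y(A_0^-),w(A_0^-))}$ and the absence of a bar on $q$ come out correctly. Concretely, one must verify that the semi-infinite length function $\delta$ used here (recording the ``dimension'' of the relevant stratum) matches the grading shift implicit in \cite{FFKM}'s IH computation, and that the generic polynomials $q_{B,A}$ defined via $\tilde{\underline P}_A=\sum_B \overline{q_{B,A}}B$ in Section~\ref{Sec_HeckeMods} agree with Lusztig's generic polynomials of \cite{Lu2} up to this orientation flip; both facts are standard but need to be stated explicitly. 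Once this dictionary is in place, the theorem is immediate from \cite{FFKM} together with Lemma~\ref{Lem_SeminfBMPMult}, which provides the independent moment-graph-theoretic incarnation of the same polynomials and thereby yields the promised algorithm of Braden and MacPherson for computing these stalks.
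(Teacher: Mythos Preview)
Your identification of the core content is correct: in the paper this theorem is not given an independent proof at all --- it is simply attributed to \cite{FFKM} (anticipated in \cite[\S11]{Lu1}), and the left-hand side is \emph{defined} via the quasi-map spaces precisely so that the statement becomes a restatement, in the paper's conventions, of the main local intersection cohomology computation of \cite{FFKM}. So your plan to ``invoke \cite{FFKM} and match conventions'' is exactly what the paper does, and the convention-matching you describe (the sign flip on $\lambda,\mu$, the identification of strata with alcoves, the shift by $\delta$) is the only actual content.

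There is, however, a logical slip in your proposal: Lemma~\ref{Lem_SeminfBMPMult} plays no role in the proof of this theorem and should not be listed as an ingredient. That lemma computes graded ranks of stalks of Braden--MacPherson sheaves on $\mathcal G^{\frac{\infty}{2}}$; it says nothing about intersection cohomology of quasi-map spaces, and the theorem in question says nothing about Braden--MacPherson sheaves. The two are only brought together \emph{after} the theorem, in the Corollary: the theorem gives $\sum_i \dim IH^{2i}\,v^i = v^{\delta}q$, the lemma gives $\underline{\mathrm{rk}}\,\mathscr B^{\frac{\infty}{2}}(A)^B = v^{\delta}q$, and comparing the two yields the isomorphism $IH^\bullet_{\widehat T}\big(\overline{X^{\frac{\infty}{2}}_w}\big)_y \simeq \mathscr B(w(A_0^-))^{y(A_0^-)}$. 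By folding the lemma into the proof of the theorem you have conflated the theorem with the Corollary that follows it; keep them separate.
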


Moreover, by the considerations in \cite[\S3.2]{FFKM} the equivariant cohomology
$IH_{\widehat T}^\bullet({\tt Q}^{-\lambda}_{u})_{-\mu, v}$ is isomorphic, as an $S$-module, to 
$S\otimes IH^\bullet({\tt Q}^{-\lambda}_{u})_{-\mu, v}$, so that we have
\[
\underline{\textrm{rk}}\ IH_{\widehat T}^\bullet({\tt Q}^{-\lambda}_{u})_{-\mu, v}= v^{\delta(y(A_0^-),w(A_0^-))}q_{y(A_0^-), w(A_0^-)}.
\]

By combining the results of this section, we get:
\begin{corollary}The Braden-MacPherson algorithm computes the local cohomology of the semi-infinite flag variety, 
that is for any pair  $y,w\in W$
\[
IH_{\widehat T}^\bullet\Big(\overline{X^\frac{\infty}{2}_w}\Big)_y\simeq \mathscr B(w(A_0^-))^{y(A_0^-)},%\otimes k.
\]
%and
\[
IH^\bullet\Big(\overline{X^\frac{\infty}{2}_w}\Big)_y\simeq \mathscr B(w(A_0^-))^{y(A_0^-)}\otimes k.
\]
\end{corollary}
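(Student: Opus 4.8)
The plan is to assemble the corollary directly from the three ingredients already in place: Lemma~\ref{Lem_SeminfBMPMult}, which computes the graded rank of the Braden--MacPherson stalk $\mathscr B^\frac{\infty}{2}(A)^B$ on the semi-infinite moment graph, Theorem~\ref{Thm_La15}'s generalisation through translation to \emph{all} pairs of alcoves, and the Feigin--Frenkel--Kuznetsov--Markarian computation of $IH^\bullet\big(\overline{X^\frac{\infty}{2}_w}\big)_y$ via quasi map spaces, together with its equivariant lift from \cite[\S3.2]{FFKM}. First I would write $w=t_\lambda u$ and $y=t_\mu v$ with $\lambda,\mu\in\mathbb Z R^\vee$ and $u,v\in W_0$, and recall the identification $\overline{X^\frac{\infty}{2}_w}\leftrightarrow {\tt Q}$ set up just above, so that $IH^\bullet_{\widehat T}\big(\overline{X^\frac{\infty}{2}_w}\big)_y:=IH^\bullet_{\widehat T}({\tt Q}^{-\lambda}_u)_{-\mu,v}$. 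The point is that both sides of the asserted isomorphism are, by construction, finitely generated free $\mathbb Z$-graded $S$-modules, so to identify them it suffices to match their graded ranks.

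The key steps, in order, are: (1) invoke the displayed consequence of \cite[\S3.2]{FFKM} that
\[
\underline{\textrm{rk}}\ IH_{\widehat T}^\bullet({\tt Q}^{-\lambda}_{u})_{-\mu, v}= v^{\delta(y(A_0^-),w(A_0^-))}q_{y(A_0^-), w(A_0^-)};
\]
(2) invoke Lemma~\ref{Lem_SeminfBMPMult} with $A=w(A_0^-)$, $B=y(A_0^-)$ to get
\[
\underline{\textrm{rk}}\ \mathscr B^\frac{\infty}{2}(w(A_0^-))^{y(A_0^-)}=v^{\delta(y(A_0^-),w(A_0^-))} q_{y(A_0^-),w(A_0^-)};
\]
(3) conclude that the two free $S$-modules have equal graded rank, hence are isomorphic as $\mathbb Z$-graded $S$-modules, which gives the first displayed isomorphism of the corollary (noting that $\mathscr B(w(A_0^-))$ in the statement means exactly $\mathscr B^\frac{\infty}{2}(w(A_0^-))$, the Braden--MacPherson sheaf on $\mathcal G^\frac{\infty}{2}$); (4) obtain the second, non-equivariant isomorphism by applying $-\otimes_S k$ to the first, using that base change commutes with taking stalks and that $IH^\bullet\big(\overline{X^\frac{\infty}{2}_w}\big)_y\simeq IH^\bullet_{\widehat T}\big(\overline{X^\frac{\infty}{2}_w}\big)_y\otimes_S k$ by the freeness established above — this is the same base-change step used for honest Schubert varieties in the section on Braden--MacPherson sheaves and intersection cohomology.

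I should be a little careful about one bookkeeping point: the translation argument in the proof of Lemma~\ref{Lem_SeminfBMPMult} reduces an arbitrary pair $(A,B)$ to one lying \emph{deep enough} in ${\mathscr A}^-$ via an antidominant $\gamma$, and I must make sure the corresponding reduction on the geometric side (the semi-infinite order being stable under root translations, and the quasi map description being compatible with shifting $\lambda,\mu$ by the same coweight) is the one already encoded in the $-\mu,-\lambda$ convention fixed before the FFKM theorem; since both the combinatorial and the geometric quantities are expressed through the \emph{same} invariants $\delta(y(A_0^-),w(A_0^-))$ and $q_{y(A_0^-),w(A_0^-)}$, which are themselves translation-invariant up to the agreed sign, this matching is automatic and no new estimate is needed.

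The main obstacle is not in the logic of the argument — which is essentially ``equal graded rank plus freeness implies isomorphism'' — but in checking that the isomorphism can be taken compatibly, i.e. that knowing equality of graded ranks really does force an $S$-module (indeed $\mathcal Z$-module) isomorphism rather than merely an abstract coincidence of numerical invariants; here one uses that a finitely generated graded free module over the polynomial ring $S$ is determined up to isomorphism by its graded rank, and that both $IH^\bullet_{\widehat T}$ of the relevant quasi map stratum stalk and the Braden--MacPherson stalk are of this form (the former by \cite[\S3.2]{FFKM}, the latter by construction, as recalled at the start of this section). Once that is granted, the corollary is immediate, and the $\mathcal Z$-module refinement — which matters if one wants the \emph{global} statement $IH^\bullet_{\widehat T}\big(\overline{X^\frac{\infty}{2}_w}\big)\simeq\Gamma(\mathscr B(w(A_0^-)))$ in the style of the affine case — would require in addition checking that the semi-infinite analogue of \cite[Theorem 1.5]{BM} holds, which is beyond the scope of what is claimed here and is exactly the kind of thing deferred to the promised forthcoming paper.
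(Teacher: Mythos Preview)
Your proposal is correct and follows exactly the paper's approach: the paper simply says ``By combining the results of this section, we get'' the corollary, meaning one matches the graded rank of the Braden--MacPherson stalk (Lemma~\ref{Lem_SeminfBMPMult}) with the graded rank of the equivariant local intersection cohomology (the FFKM theorem together with the freeness from \cite[\S3.2]{FFKM}), and then uses that finitely generated free graded $S$-modules with equal graded rank are isomorphic. Your extra paragraph on translation bookkeeping is unnecessary (both sides are already expressed through the same polynomial $q_{y(A_0^-),w(A_0^-)}$), and your observation that the isomorphism is only abstract is precisely the content of the Remark the paper places immediately after the corollary.
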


\begin{remark}Notice that the above isomorphisms are obtained as abstract isomorphisms of graded $S$-modules and it is natural to ask whether they have in fact a functorial origin. We hope to be able to give a positive answer to such a question in a forthcoming paper, by investigating the $\widehat T$-equivariant intersection cohomology of quasi-map spaces.
\end{remark}

\section{Stalks of Braden-MacPherson sheaves and multiplicity formulae}\label{Sec_BMPMultiplicities}
Assume $\mathcal G$ is the affine Bruhat  moment graph of Section \ref{Sec_ToriAndMG} and let $k=\mathbb C$. The vertices of
$\mathcal G$ are indexed by elements of an affine Weyl group $W$ and for any  pair $y,w\in W$ 
\begin{equation}\label{Eqn_MultiplicityConj}
\underline{\textrm{rk}}\mathscr B(w)^y=v^{l(y)-l(w)} h_{y,w}.
\end{equation}
The above equality follows from \cite{BM} and the fact that affine Kazhdan-Lusztig polynomials are (up to a shift in our 
normalisation) Poincar\'e polynomials for the stalks of local intersection cohomology of the affine flag variety with 
coefficients in a field  of characteristic zero \cite{KL80}. It is in fact possible to associate with any reflection faithful representation of a Coxeter system
a moment graph and to study indecomposable Braden-MacPherson sheaves on it. The moment graph formulation of Soergel's
conjecture states that \eqref{Eqn_MultiplicityConj} holds in this more general setting too and has been proven, as we 
have  already mentioned, by Elias and Williamson \cite{EW}.

 In the affine Bruhat graph case (and, more in general, for a moment graph associated with a symmetrisable Kac-Moody 
 algebra), Fiebig \cite{Fie} proved a moment graph localisation theorem for modules in (a deformed  version of) category
 $\mathcal O$  admitting a Verma flag, that is a finite filtration with subquotients isomorphic to Verma modules.
 The fundamental application of his result is that one can use Braden-MacPherson sheaves on $\mathcal G$ to compute
 multiplicities of simples in Verma modules at a negative level without passing through geometry:
\begin{equation} 
\label{Eqn_MultiplicityBMPVerma}
\underline{\textrm{rk}}\mathscr B(w)^y(1) = [\Delta(y\cdot \lambda):L(w\cdot \lambda)].
\end{equation}

Once we know \eqref{Eqn_MultiplicityBMPVerma}, it is clear  that from  \eqref{Eqn_MultiplicityConj}   follows \eqref{Eqn_NegativeKL},  the affine version of the Kazhdan-Lusztig conjecture at a negative level for the affinisation of a simple complex Lie algebra ${\mathfrak g}$. 

Next, we want to focus on the semi-infinite graph $\mathcal G^\frac{\infty}{2}$.  In \cite{AL} we prove a (semi-infinite) moment graph
localisation theorem for modules in (a deformed  and truncated version of) the affine BGG category $\mathcal O$  admitting a 
Wakimoto flag, that is a finite filtration with subquotients isomorphic to Wakimoto modules and we expect to be able to
use Theorem \ref{Thm_La15} to interpret multiplicities of simple quotients of Wakimoto modules in terms of generic
polynomials.

We conclude this section by briefly mentioning the main result of \cite{FieLa}. In \cite{FieLa15b} we introduce  the notion of group actions on a moment graph and  in \cite{FieLa} we consider the special case of the root lattice acting on  $\mathcal G^\frac{\infty}{2}$. This allows us to 
define a sort of pushforward functor from the category of sheaves on $\mathcal G^\frac{\infty}{2}$ to a certain category of modules over 
the structure algebra of the quotient graph, equipped with a particularly nice filtration (see \cite{FieLa}). Under this
pushforward functor, the indecomposable Braden-MacPherson sheaves $\mathscr B^\frac{\infty}{2}(A) $ decompose and looking at the
indecomposable summands enables us to define a certain object $\mathscr P(A)$ for any $A\in{\mathscr A}$. For any $B,A\in {\mathscr A}$,
it is possible to consider a finitely generated, free $\mathbb Z$-graded $S$-module $\mathscr P(A)_{[B]}$ and we have:
\begin{theorem}\label{Thm_FieLa15}
$\underline{\textrm{rk}}\ \mathscr P(A)_{[B]}= p_{B,A}.$
\end{theorem}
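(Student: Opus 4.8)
The plan is to compare the two families of self-dual elements that characterise $p_{B,A}$ and $\underline{\textrm{rk}}\ \mathscr P(A)_{[B]}$, exploiting the interplay between the Braden--MacPherson formalism on $\mathcal G^\frac{\infty}{2}$, the pushforward along the root-lattice action, and Lusztig's periodic module ${\mathcal P}^0$. First I would record what $\mathscr P(A)$ is: by construction it is an indecomposable summand of the pushforward of $\mathscr B^\frac{\infty}{2}(A)$ to the category of modules over the structure algebra of the quotient graph $\mathcal G^\frac{\infty}{2}/\Lambda$, and $\mathscr P(A)_{[B]}$ is the piece of $\mathscr P(A)$ attached to the image $[B]$ of $B$ in the quotient, which is finitely generated and free over $S$ so that $\underline{\textrm{rk}}$ makes sense. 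By Lemma \ref{Lem_SeminfBMPMult} we already know $\underline{\textrm{rk}}\ \mathscr B^\frac{\infty}{2}(A)^B = v^{\delta(A,B)}q_{B,A}$, so the heart of the matter is to understand how the generic polynomials $q_{B,A}$ reorganise, after summing over a $\Lambda$-coset, into the periodic polynomials $p_{B,A}$.

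The key step is a summation identity. Classically (and this is essentially the content relating the two bases of ${\mathcal P}$ that Lusztig studies), the periodic polynomial $p_{B,A}$ is obtained from the generic polynomials by a signed sum over the $\Lambda$-orbit: roughly $p_{B,A} = \sum_{\gamma\in\Lambda} (\pm) q_{B+\gamma', A}$-type expressions, reflecting the fact that $\underline{P}_A \in {\mathcal P}^0$ while $\tilde{\underline{P}}_A$ lives in the completion $\widehat{\mathcal P}$, and ${\mathcal P}^0$ is cut out inside $\widehat{\mathcal P}$ by the $E_\lambda$-conditions, i.e. by a symmetrisation over $W_0$-translates of alcoves. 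I would translate the pushforward construction of \cite{FieLa} into exactly this bookkeeping: the root lattice $\Lambda$ acts on $\mathcal G^\frac{\infty}{2}$ by root translations (the semi-infinite order being $\Lambda$-stable, as noted in the Remark after Lemma \ref{Lem_SeminfLimitBruhat}), the pushforward collapses each $\Lambda$-orbit of vertices to a single vertex of the quotient, and the graded rank of the $[B]$-component of the pushforward of $\mathscr B^\frac{\infty}{2}(A)$ is the sum of the graded ranks $\sum_{\gamma} \underline{\textrm{rk}}\ \mathscr B^\frac{\infty}{2}(A)^{B+\gamma}$ over the relevant orbit. Decomposing this pushforward into its indecomposable summands $\mathscr P(A')$ and extracting the summand $\mathscr P(A)$ then yields $\underline{\textrm{rk}}\ \mathscr P(A)_{[B]}$ as a specific sub-sum, which I would match to $p_{B,A}$ via the self-duality characterisation: $\mathscr P(A)$ is self-dual and indexed so that its "leading term'' at $[A]$ is $1$ with all other components in $v\mathbb Z[v]$, which is precisely the defining property of $\underline{P}_A$ in Theorem (Lusztig), forcing $\underline{\textrm{rk}}\ \mathscr P(A)_{[B]} = p_{B,A}$.

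The main obstacle, I expect, is the bookkeeping of gradings and signs in passing from $v^{\delta(A,B)}q_{B,A}$ to $p_{B,A}$: the shift $v^{\delta(A,B)}$ by the semi-infinite length must be absorbed correctly, and the alternating sum that produces elements of ${\mathcal P}^0$ out of $\widehat{\mathcal P}$ must be shown to correspond exactly to the decomposition of the pushforward sheaf into indecomposables — in particular, one must verify that no cancellation between distinct summands $\mathscr P(A')$ spoils the identification, which is where the full strength of the deformation-theoretic/structure-algebra arguments of \cite{FieLa} is needed (the filtration on the target category is designed precisely to control this). A secondary technical point is that the quotient graph is again a well-behaved moment graph so that the Braden--MacPherson/indecomposability machinery applies; this should follow from the group-action formalism of \cite{FieLa15b}, but it must be invoked carefully since $\mathcal G^\frac{\infty}{2}$ is unbounded in both directions. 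Once these are in place, the theorem follows by combining Lemma \ref{Lem_SeminfBMPMult} with the summation identity and the uniqueness in Lusztig's theorem characterising $\underline{P}_A$.
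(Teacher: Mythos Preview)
The paper does not contain a proof of this theorem. It is stated in Section~\ref{Sec_BMPMultiplicities} as ``the main result of \cite{FieLa}'' and is merely quoted; the surrounding text only sketches the setup (a root-lattice action on $\mathcal G^{\frac{\infty}{2}}$, a pushforward functor to filtered modules over the structure algebra of the quotient, and the definition of $\mathscr P(A)$ as an indecomposable summand of the pushforward of $\mathscr B^{\frac{\infty}{2}}(A)$). So there is no proof here against which to compare your proposal.

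Regarding your outline on its own merits: the overall architecture---relate $\mathscr P(A)$ to $\mathscr B^{\frac{\infty}{2}}(A)$ via pushforward, feed in Lemma~\ref{Lem_SeminfBMPMult} to get generic polynomials, and then invoke the uniqueness clause of Lusztig's theorem characterising $\underline{P}_A$---is the natural shape of an argument, and is consistent with what little the paper says. But two points are genuinely missing rather than just ``bookkeeping''. First, your ``summation identity'' is never made precise, and in fact the passage from generic to periodic polynomials is not a sum over a single $\Lambda$-coset of alcoves; it involves the $E_\lambda$'s, i.e.\ a $W_0$-symmetrisation with powers of $v$, so the combinatorial identity you need is not the one you wrote. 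Second, and more seriously, to use the uniqueness characterisation of $\underline{P}_A$ you must show that the element $\sum_B \underline{\mathrm{rk}}\,\mathscr P(A)_{[B]}\cdot B$ actually lies in ${\mathcal P}^0$ and is fixed by the specific $\mathcal H$-skew-linear involution defined there. On the module side this means constructing a duality on the target category of the pushforward and proving $\mathscr P(A)$ is self-dual, together with the degree bound $\underline{\mathrm{rk}}\,\mathscr P(A)_{[B]}\in v\mathbb Z[v]$ for $B\neq A$. You acknowledge this is where \cite{FieLa} does the work, but your proposal does not supply any mechanism for it; without it the appeal to Lusztig's uniqueness is circular.
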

The above theorem  suggests that our construction should have applications in the questions discussed in Sections 
\ref{Subsec_G1TMods} and \ref{Subsubsec_CriticalLevel}, where Lusztig's periodic polynomials appear.

\section{Acknowledgements}
Many thanks go to  Michael Finkelberg for interesting correspondence, making me realise that the semi-infinite world is much more complicated than what I was hoping, and to Peter Fiebig and Stephen Griffeth for their helpful remarks on a preliminary version of this paper.
 I wish to thank Mainheim Caf\'e in Nuremberg, where this paper has been entirely written, for providing very pleasant working environment. This work was supported by the DFG grant SP1388.

\end{document}